\newtheorem{theorem}{Theorem}[section]
\newtheorem{lemma}[theorem]{Lemma}
\newtheorem{proposition}[theorem]{Proposition}
\newtheorem{corollary}[theorem]{Corollary}
\theoremstyle{definition}
\newtheorem{definition}[theorem]{Definition}
\numberwithin{equation}{section}
\theoremstyle{remark}
\newtheorem*{remark}{Remark}
\providecommand{\norm}[1]{ \lVert#1  \rVert}
\newcommand{\dmu}{\, d \mu}
\newcommand{\dla}{\, d \lambda}
\newcommand{\lp}{\left (}
\newcommand{\rp}{\right )}
\def\Xint#1{\mathchoice
   {\XXint\displaystyle\textstyle{#1}}%
   {\XXint\textstyle\scriptstyle{#1}}%
   {\XXint\scriptstyle\scriptscriptstyle{#1}}%
   {\XXint\scriptscriptstyle\scriptscriptstyle{#1}}%
   \!\int}
\def\XXint#1#2#3{{\setbox0=\hbox{$#1{#2#3}{\int}$}
     \vcenter{\hbox{$#2#3$}}\kern-.5\wd0}}
\def\dashint{\Xint-}
\begin{document}

\title{Median-type John--Nirenberg space in metric measure spaces}

\author{Kim Myyryl\"{a}inen}
\address{Department of Mathematics, Aalto University, P.O. Box 11100, FI-00076 Aalto, Finland}
\email{kim.myyrylainen@aalto.fi}
\thanks{The author would like to thank Juha Kinnunen and Riikka Korte for valuable discussions. The author would also like to thank the anonymous referee for carefully reading the paper and for constructive comments. The research was supported by the Academy of Finland.}

\subjclass[2020]{42B35, 43A85}

\keywords{John--Nirenberg space, median, John--Nirenberg inequality, doubling measure, metric space}

\begin{abstract}
We study the so-called John--Nirenberg space that is a generalization of functions of bounded mean oscillation in the setting of metric measure spaces with a doubling measure. Our main results are local and global John--Nirenberg inequalities, which give weak type estimates for the oscillation of a function. We consider medians instead of integral averages throughout, and thus functions are not a priori assumed to be locally integrable. Our arguments are based on a Calder\'{o}n-Zygmund decomposition and a good-$\lambda$ inequality for medians. A John--Nirenberg inequality up to the boundary is proven by using chaining arguments. As a consequence, the integral-type and the median-type John--Nirenberg spaces coincide under a Boman-type chaining assumption.

\end{abstract}

\maketitle

\section{Introduction}

The space of functions of bounded mean oscillation (BMO) was introduced by John and Nirenberg in~\cite{john_original}. 
In that article, they also discussed a larger BMO-type space called the John--Nirenberg space, denoted by $JN_p$ with $1<p<\infty$. 
The John--Nirenberg space contains BMO, and BMO is obtained as the limit of $JN_p$ as $p \to \infty$. 
The John--Nirenberg lemma in~\cite{john_original} states that a logarithmic blowup is the worst possible for a BMO function.
Moreover, another version of the John-Nirenberg lemma in~\cite{john_original} implies that $JN_{p}$ is a subset of weak $L^{p}$.
This inclusion is strict, as shown by a one-dimensional example in~\cite{aalto}.
It is also known that $L^p$ is a proper subset of $JN_p$; see~\cite{korte}.

In~\cite{johnmedian}, John discussed BMO using medians instead of integral averages.
He also proved the analogous John--Nirenberg inequality for the median-type BMO in a Euclidean space. Later, Str\"{o}mberg proved the inequality for a larger class of medians and also gave a proof in a metric measure space with a doubling measure~\cite{stromberg, weightedhardy}. 
In particular, this implies that the ordinary BMO and the median-type BMO are equivalent.
While BMO has been studied extensively, the John--Nirenberg space is not equally well understood.
However, some results related to John--Nirenberg inequalities and interpolation of operators can be found in~\cite{campanato1966,stampacchia1965, giaquinta, giaquinta_martinazzi, giusti}.

Our main results are John--Nirenberg inequalities for the median-type John--Nirenberg space.
We carry out the analysis in a metric measure space with a doubling measure, but the main results are new even in the Euclidean setting.
The main novelty is that we consider medians instead of integral averages throughout.
One of the advantages of using medians is that we do not need to assume that the function is locally integrable.
Thus, the definition via medians applies to any measurable function. 
There exist different definitions of John--Nirenberg spaces in metric measure spaces; see~\cite{aalto, kinnunen, marolasaari}. 
The difference is whether the balls in the definition are required to be pairwise disjoint or allowed to overlap.
Definitions with disjoint balls can be found in~\cite{kinnunen, marolasaari} and with overlap in~\cite{aalto}.
We adopt the definition in~\cite{kinnunen, marolasaari} which leads to a more general theory.

The proof of the local John--Nirenberg inequality with medians (Theorem~\ref{thm1}) in Section 4 is based on a Calder\'{o}n-Zygmund decomposition (Lemma~\ref{lemma3}) and a good-$\lambda$ inequality (Lemma~\ref{lemma4}) for medians.
This is inspired by~\cite{aalto, kinnunen}, where the authors examine integral-type John--Nirenberg spaces in metric measure spaces with a doubling measure.
A challenge in proving results for the median-type John--Nirenberg space is that medians lack subadditive properties and monotonicity on sets compared to integral averages.
Our Theorem~\ref{thm1} implies a related John--Nirenberg inequality in~\cite[Theorem 1.3]{lerner_perez} where a discrete summability condition is considered; see also~\cite{kinnunen, franchi_perez_wheeden, macmanus_perez}.
A simple one-dimensional example shows that our result is more general than~\cite[Theorem 1.3]{lerner_perez}.
In particular, our approach does not depend on the discrete summability condition and rearrangements.

In Section 5, we prove a John--Nirenberg inequality with medians up to the boundary (Theorem~\ref{JN-lemma_Boman}) in Boman sets by applying the local John--Nirenberg inequality together with chaining arguments.
Boman sets are closely related to the Boman chain condition, introduced in the unpublished paper~\cite{bomanlp}.
The connection is discussed in~\cite{marolasaari}.
In particular, the two are equivalent in a geodesic space with a doubling measure.
The Boman chain condition characterizes John domains in many metric measure spaces, including Euclidean spaces; see~\cite{boman_eq_john}.
The corresponding results for the integral-type John--Nirenberg space can be found in~\cite{johndomain, marolasaari}.
We apply the arguments in~\cite{chua, johndomain, marolasaari} for medians. 
As a corollary of the global John--Nirenberg inequality, we show that the integral- and median-type John--Nirenberg spaces coincide in every open set 
under the assumption that balls are Boman sets with uniform parameters (Corollary \ref{equivalence_Boman}).
This means that the median-type John--Nirenberg condition is possibly the weakest for a function to be in $JN_p$.
The uniform Boman condition on balls holds, for example, in geodesic spaces~\cite{koskela}.

\section{Preliminaries}

Let $(X,d,\mu)$ be a metric measure space with a metric $d$ and a doubling measure~$\mu$. 
A Borel regular measure is said to be doubling if
\[
0 < \mu(2B) \leq c_\mu \mu(B) < \infty
\]
for every ball $B = B(x,r) = \{ y\in X: d(x,y) < r \}$, where $c_\mu>1$ is the doubling constant.
We use the notation $\lambda B = B(x,\lambda r)$, $\lambda>0$, for the $\lambda$-dilate of $B$.
From the doubling property of the measure, it can be deduced that if $y \in B(x,R) \subset X$ and $0<r\leq R <\infty$, then
\[
\frac{\mu(B(x,R))}{\mu(B(y,r))} \leq c_\mu^2 \lp \frac{R}{r} \rp^D,
\]
where $D = \log_2 c_\mu$ is the doubling dimension of the space $(X,d,\mu)$. The proof can be found in~\cite[p.~6]{bjorn}.
We denote the integral average of a function $f \in L^1(A)$ in a set $A \subset X$ by
\[
f_A = \dashint_A f \dmu = \frac{1}{\mu(A)} \int_A f \dmu .
\]

A basic tool in metric measure spaces is the 5-covering theorem. It is also sometimes referred to as the basic covering theorem or Vitali's covering theorem. Although, one must be careful since there is another covering theorem named after Vitali~\cite[pp.~3--4]{heinonen}. One can check~\cite[pp.~2--3]{heinonen} for a proof.

\begin{lemma}
Let $\mathcal{F}$ be a collection of balls of uniformly bounded radii in $X$. Then there exists a countable disjointed subcollection $\mathcal{G}$ such that
\[
\bigcup_{B \in \mathcal{F}} B \subset \bigcup_{B \in \mathcal{G}} 5B .
\]
\end{lemma}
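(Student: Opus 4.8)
The plan is to carry out the standard greedy selection, organised according to a dyadic grouping of the radii. Write each ball in $\mathcal{F}$ as $B = B(x_B, r_B)$ and let $R = \sup_{B \in \mathcal{F}} r_B$, which is finite by hypothesis. Split $\mathcal{F}$ into the subfamilies
\[
\mathcal{F}_j = \{ B \in \mathcal{F} : R/2^{j} < r_B \leq R/2^{j-1} \}, \qquad j = 1,2,\dots,
\]
so that $\mathcal{F} = \bigcup_{j \geq 1} \mathcal{F}_j$. I would then construct pairwise disjoint subfamilies $\mathcal{G}_j \subset \mathcal{F}_j$ by induction on $j$: let $\mathcal{G}_1$ be a maximal pairwise disjoint subfamily of $\mathcal{F}_1$, and, given $\mathcal{G}_1,\dots,\mathcal{G}_{j-1}$, let $\mathcal{G}_j$ be a maximal pairwise disjoint subfamily of
\[
\{ B \in \mathcal{F}_j : B \cap B' = \emptyset \text{ for every } B' \in \mathcal{G}_1 \cup \cdots \cup \mathcal{G}_{j-1} \}.
\]
Such maximal subfamilies exist by Zorn's lemma, since the union of any chain (ordered by inclusion) of pairwise disjoint subfamilies is again pairwise disjoint. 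Set $\mathcal{G} = \bigcup_{j \geq 1} \mathcal{G}_j$; it is pairwise disjoint by construction. Moreover a metric space carrying a doubling measure is separable, so any pairwise disjoint family of nonempty balls is at most countable; hence $\mathcal{G}$ is countable.

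For the inclusion, fix $B \in \mathcal{F}$, say $B \in \mathcal{F}_j$. If $B$ met no ball of $\mathcal{G}_1 \cup \cdots \cup \mathcal{G}_j$, then $B$ could be adjoined to $\mathcal{G}_j$, contradicting maximality; hence there is $B' \in \mathcal{G}_1 \cup \cdots \cup \mathcal{G}_j$ with $B \cap B' \neq \emptyset$ (possibly $B' = B$). Since $B' \in \mathcal{F}_i$ for some $i \leq j$, we have $r_{B'} > R/2^{i} \geq R/2^{j} \geq r_B / 2$, that is, $r_B < 2 r_{B'}$. Choosing $z \in B \cap B'$ and any $x \in B$, the triangle inequality gives $d(x, x_{B'}) \leq d(x, x_B) + d(x_B, z) + d(z, x_{B'}) < 2 r_B + r_{B'} < 5 r_{B'}$, so $B \subset 5B'$. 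Therefore $\bigcup_{B \in \mathcal{F}} B \subset \bigcup_{B' \in \mathcal{G}} 5 B'$, as required.

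The computation is elementary; the one point that has to be arranged with care — and the reason for the dyadic grouping — is precisely the bound $r_B < 2 r_{B'}$ for any previously chosen ball $B'$ meeting $B$, since this is exactly what turns the triangle-inequality estimate $r_B + (r_B + r_{B'})$ into $5 r_{B'}$ rather than something larger. The remaining points to state cleanly are the appeal to Zorn's lemma (or an equivalent transfinite greedy choice) for the maximal disjoint subfamilies, and the separability remark that makes $\mathcal{G}$ genuinely countable and not merely disjointed.
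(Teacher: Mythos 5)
Your proof is correct: the dyadic decomposition of the radii, the successive maximal disjoint selections, the bound $r_B < 2r_{B'}$, and the $5r$ triangle-inequality estimate all check out, and the separability remark legitimately settles countability in the doubling setting. The paper does not prove this lemma itself but cites Heinonen (pp.~2--3), and your argument is essentially that standard proof, so there is nothing to add.
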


The Lebesgue differentiation theorem states that the integral average of a function $f$ over a ball $B(x,r)$ approaches $f(x)$ when the radius $r$ tends to zero. Two different proofs can be found in~\cite[pp.~4--6,~12--13]{heinonen}.

\begin{lemma}
\label{leb.diff.thm}
If $f \in L_{\text{\normalfont loc}}^1(X)$, then it holds that
\[
\lim_{r \to 0} \dashint_{B(x,r)} |f-f(x)| \dmu = 0
\]
for $\mu$-almost every $x \in X$.
\end{lemma}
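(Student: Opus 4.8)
The plan is to follow the classical route through the Hardy--Littlewood maximal function, which transfers to the doubling metric setting with only cosmetic changes. Since the conclusion is local, I would fix a ball $B_0 = B(x_0,r_0)$, replace $f$ by $f\chi_{2B_0}$, which is integrable, and establish the assertion for $\mu$-a.e.\ $x \in B_0$; exhausting $X$ by a countable family of such balls then yields the full statement. Note that for $x \in B_0$ and $0 < r < r_0$ one has $B(x,r) \subset 2B_0$, so only the localized function is seen in the limit.

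First I would record the weak-type $(1,1)$ estimate for the centered maximal function $Mg(x) = \sup_{r>0} \dashint_{B(x,r)} \abs{g} \dmu$, namely $\mu(\{Mg > \lambda\}) \le \frac{C}{\lambda}\norm{g}_{L^1(X)}$ with $C$ depending only on $c_\mu$. This follows from the $5$-covering theorem: cover $\{Mg > \lambda\}$ by balls $B$ with $\dashint_B \abs{g} \dmu > \lambda$ (of uniformly bounded radii, thanks to the localization), extract a countable disjointed subcollection $\mathcal{G}$ whose $5$-dilates still cover, and estimate
\[
\mu(\{Mg > \lambda\}) \le \sum_{B \in \mathcal{G}} \mu(5B) \le c_\mu^3 \sum_{B \in \mathcal{G}} \mu(B) \le \frac{c_\mu^3}{\lambda} \sum_{B \in \mathcal{G}} \int_B \abs{g} \dmu \le \frac{c_\mu^3}{\lambda} \norm{g}_{L^1(X)},
\]
where disjointness of $\mathcal{G}$ is used in the last inequality.

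Next I would invoke the density of continuous functions in $L^1(\mu)$: given $\varepsilon > 0$, choose continuous $g$ with $\norm{f - g}_{L^1(2B_0)} < \varepsilon$ and put $h = f - g$. For the continuous part the claim is immediate, since $\dashint_{B(x,r)} \abs{g - g(x)} \dmu \le \sup_{B(x,r)} \abs{g - g(x)} \to 0$ as $r \to 0$ at every $x$. Combining this with the triangle inequality gives, for $\mu$-a.e.\ $x \in B_0$,
\[
\limsup_{r \to 0} \dashint_{B(x,r)} \abs{f - f(x)} \dmu \le Mh(x) + \abs{h(x)}.
\]
Hence, for each fixed $\lambda > 0$, Chebyshev's inequality and the weak-$(1,1)$ bound yield
\[
\mu\lp \Big\{ x \in B_0 : \limsup_{r \to 0} \dashint_{B(x,r)} \abs{f - f(x)} \dmu > \lambda \Big\} \rp \le \frac{C'}{\lambda} \norm{h}_{L^1(2B_0)} \le \frac{C' \varepsilon}{\lambda}.
\]
Letting $\varepsilon \to 0$ shows this set is $\mu$-null for every $\lambda > 0$, and taking the union over $\lambda = 1/k$, $k \in \mathbb{N}$, finishes the proof.

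The hard part is really the weak-type $(1,1)$ inequality --- proving it from the $5$-covering theorem while keeping the radii of the covering balls uniformly bounded (which is what the localization to $B_0$ buys) --- together with the density of continuous functions in $L^1(\mu)$, available because $\mu$ is a Radon measure that is finite on balls. Beyond these two inputs the argument is a routine $\varepsilon$-estimate, and the absence of structure on $(X,d,\mu)$ beyond the doubling property causes no additional trouble.
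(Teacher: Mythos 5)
The paper does not prove this lemma at all — it simply cites Heinonen's book, and your maximal-function argument (weak-type $(1,1)$ via the 5-covering theorem plus density of continuous functions in $L^1$) is precisely the standard proof referenced there, so the approach matches and the proof is correct. The only imprecision is the parenthetical claim that localizing $f$ to $2B_0$ makes the radii of the covering balls uniformly bounded — by itself it does not; but since only the limit $r \to 0$ matters, you can restrict the maximal operator to radii $r < r_0$ (or prove the bound for the truncated operator $M_R$ with a constant independent of $R$ and let $R \to \infty$), after which the 5-covering theorem applies exactly as you use it.
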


We follow the definition in~\cite{kinnunen, marolasaari} for the integral-type John--Nirenberg space $JN_{p,q}$ in metric measure spaces.

\begin{definition}
\label{JNp}
Let $\Omega \subset X$ be an open set, $1<p<\infty$ and $0<q<p$. We say that a function $f \in L_{\text{loc}}^q(\Omega)$ belongs to the John-Nirenberg space $JN_{p,q}(\Omega)$ if
\[
\norm{f}_{JN_{p,q}(\Omega)}^p = \sup \sum_{i=1}^{\infty} \mu(B_i) \lp \inf_{c_i \in \mathbb{R}} \dashint_{B_i} |f-c_i|^q \dmu \rp^\frac{p}{q}  < \infty  ,
\]
where the supremum is taken over countable collections of pairwise disjoint balls $B_i \subset \Omega$.

\end{definition}

If $q=1$, we write $JN_p$ instead of $JN_{p,1}$.
We omit $\Omega$ from the norms if the considered set is clear from the context.

Next, we present medians and discuss their properties.
Medians have been studied and used in different problems of analysis; see for example~\cite{federer_herbert_william, fujii, gogatishvili, heikkinen, heikkinen_measuredensity, heikkinen_kinnunen, approxquasi, approxholder, jawerth_perez_welland, jawerth_torchinsky, johnmedian, karak, lerner, lerner_perez, medians, stromberg, weightedhardy, zhou}.

\begin{definition}
Let $A \subset X$ be a set of finite and positive measure, $0<s\leq 1$ and $f:X \rightarrow [-\infty, \infty]$ be a measurable function.
If a value $M_f^s(A)$ satisfies
\[
\mu(\{x \in A: f(x) > M_f^s(A)\}) \leq s \mu(A)
\]
and
\[
\mu(\{x \in A: f(x) < M_f^s(A)\}) \leq (1-s) \mu(A),
\]
then we call $M_f^s(A)$ an $s$-median of the function $f$ over a set $A$.

\end{definition}

Note that a $\tfrac{1}{2}$-median is a standard median value of $f$ over $A$.
An $s$-median of a function is not always unique. For example, consider $f = \chi_{[1/2, 1]}$ on the interval $[0,1]$. Then any value between 0 and 1 is a $\tfrac{1}{2}$-median of $f$. Thus, we define the maximal $s$-median $m_f^s(A)$ which is unique~\cite{medians}.

\begin{definition}
\label{maxmedian}
Let $0<s \leq 1$ and $A \subset X$ be such that $0<\mu(A)<\infty$.
The maximal $s$-median of a measurable function $f:X \rightarrow [-\infty, \infty]$ over a set $A$ is defined as
\[
m_f^s(A) = \inf \{a \in \mathbb{R} : \mu(\{x \in A: f(x) > a\}) < s \mu(A) \} .
\]
\end{definition}

It can be shown that the maximal $s$-median of a function is indeed an $s$-median~\cite{medians}.
In the next lemma, we list the basic properties of the maximal $s$-median.
Most of these properties are listed without proofs in~\cite{approxquasi, approxholder}.
The proofs of properties (i), (ii), (v), (vii), (viii) and (ix) can be found in~\cite[Proposition~1.1]{medians} in the Euclidean setting. The proofs of these properties are practically same in metric measure spaces, and thus are omitted here.
We give proofs for the remaining properties.

\begin{lemma}
\label{medianprops}
Let $A \subset X$ such that $0 < \mu(A) < \infty$,  $f,g:X \rightarrow [-\infty, \infty]$ be measurable functions and $0<s\leq 1$.
Then the maximal $s$-median has the following properties:
\begin{enumerate}[(i),topsep=5pt,itemsep=5pt]

\item If $s \leq s'$, then $m_f^{s'}(A) \leq m_f^s(A)$.

\item $m_f^s(A) \leq m_g^s(A)$ whenever $f\leq g$ $\mu$-almost everywhere in $A$.

\item If $A \subset A'$ and $\mu(A') \leq c\mu(A)$ with some $c \geq 1$, then $m_f^s(A) \leq m_f^{s/c}(A')$.

\item $m_{\varphi \circ f}^s(A) = \varphi(m_f^s(A))$ for an increasing continuous function $\varphi: f[A] \to [-\infty, \infty]$.

\item $m_f^s(A) + c = m_{f+c}^s(A)$ for $c \in \mathbb{R}$.

\item $m_{cf}^s(A) = c \, m_f^s(A)$ for $c > 0$.

\item $|m_{f}^s(A)| \leq m_{|f|}^{\min\{s,1-s\}}(A)$.

\item $m_{f+g}^s(A) \leq m_f^{t_1}(A) + m_g^{t_2}(A)$ whenever $t_1 + t_2 \leq s$.

\item For $f \in L^p(A)$ and $p>0$, \[ m_{|f|}^s(A) \leq \lp s^{-1} \dashint_A |f|^p \dmu \rp^{\frac{1}{p}}. \]

\item If $A_i$ are pairwise disjoint for every $i \in \mathbb{N}$, then
\[
\inf_{i} m_f^s(A_i) \leq m_f^s\big(\bigcup_{i=1}^\infty A_i\big) \leq \sup_{i} m_f^s(A_i) .
\]

\end{enumerate}
\end{lemma}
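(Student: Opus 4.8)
The plan is to prove the four properties for which the text promises proofs, namely (iii), (iv), (vi) and (x), working directly from Definition~\ref{maxmedian}. The single structural fact behind all of them is that, for a fixed $A$ with $0<\mu(A)<\infty$, the function $a\mapsto\mu(\{x\in A:f(x)>a\})$ is non-increasing, so the set
\[
S_f^s(A)=\{a\in\mathbb{R}:\mu(\{x\in A:f(x)>a\})<s\mu(A)\},
\]
whose infimum is $m_f^s(A)$, is a right half-line. Concretely: $\mu(\{x\in A:f(x)>a\})<s\mu(A)$ for every $a>m_f^s(A)$, $\mu(\{x\in A:f(x)>a\})\ge s\mu(A)$ for every $a<m_f^s(A)$, and $a\notin S_f^s(A)$ forces $m_f^s(A)\ge a$. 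Each argument below rewrites the hypothesis as an inclusion of superlevel sets and then compares the defining infima.

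For (iii), note $s/c\le 1$ (as $s\le 1$, $c\ge 1$) and $0<\mu(A)\le\mu(A')\le c\mu(A)<\infty$, so $m_f^{s/c}(A')$ is defined. If $a\in S_f^{s/c}(A')$, then, since $A\subset A'$ and $\mu(A')\le c\mu(A)$,
\[
\mu(\{x\in A:f(x)>a\})\le\mu(\{x\in A':f(x)>a\})<\tfrac{s}{c}\,\mu(A')\le s\mu(A),
\]
so $a\in S_f^s(A)$ and $m_f^s(A)\le a$; taking the infimum over $a\in S_f^{s/c}(A')$ gives $m_f^s(A)\le m_f^{s/c}(A')$.

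For (iv), write $m=m_f^s(A)$ and treat first the generic case where $\varphi(m)\in\mathbb{R}$ and $m$ is not an endpoint of $f[A]$. If $b<\varphi(m)$, continuity and monotonicity of $\varphi$ give $a<m$ (in $f[A]$) with $\varphi(a)>b$; then $f(x)>a$ implies $\varphi(f(x))\ge\varphi(a)>b$, so $\{x\in A:f(x)>a\}\subset\{x\in A:\varphi(f(x))>b\}$, whence $\mu(\{x\in A:\varphi(f(x))>b\})\ge\mu(\{x\in A:f(x)>a\})\ge s\mu(A)$ because $a<m$. Thus no $b<\varphi(m)$ lies in $S_{\varphi\circ f}^s(A)$, so $m_{\varphi\circ f}^s(A)\ge\varphi(m)$. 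Conversely, if $b>\varphi(m)$, choose $a>m$ with $\varphi(a)<b$; then $\varphi(f(x))>b$ forces $f(x)>a$, so $\{x\in A:\varphi(f(x))>b\}\subset\{x\in A:f(x)>a\}$ and $\mu(\{x\in A:\varphi(f(x))>b\})<s\mu(A)$ since $a>m$; hence $m_{\varphi\circ f}^s(A)\le b$, and letting $b\downarrow\varphi(m)$ gives equality. The boundary cases — $m$ an endpoint of $f[A]$, or $\varphi(m)=\pm\infty$ — are checked directly, since then one of the two superlevel sets is empty or all of $A$ and the identity is immediate. Property (vi) is the special case $\varphi(t)=ct$ with $c>0$, which also follows in one line from $\{x\in A:cf(x)>a\}=\{x\in A:f(x)>a/c\}$ and Definition~\ref{maxmedian}.

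For (x), put $A=\bigcup_{i=1}^\infty A_i$, with $0<\mu(A)<\infty$ understood; by disjointness $\mu(A)=\sum_i\mu(A_i)$ with each $\mu(A_i)>0$. For the upper bound take $a>\sup_i m_f^s(A_i)$, so $\mu(\{x\in A_i:f(x)>a\})<s\mu(A_i)$ for every $i$. The deficits $d_i=s\mu(A_i)-\mu(\{x\in A_i:f(x)>a\})$ are strictly positive, so $\sum_i d_i>0$; since $\sum_i s\mu(A_i)=s\mu(A)<\infty$, countable additivity gives
\[
\mu(\{x\in A:f(x)>a\})=\sum_{i=1}^\infty\mu(\{x\in A_i:f(x)>a\})=s\mu(A)-\sum_{i=1}^\infty d_i<s\mu(A),
\]
so $m_f^s(A)\le a$; letting $a\downarrow\sup_i m_f^s(A_i)$ yields $m_f^s(A)\le\sup_i m_f^s(A_i)$. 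For the lower bound take $a<\inf_i m_f^s(A_i)$, so $\mu(\{x\in A_i:f(x)>a\})\ge s\mu(A_i)$ for every $i$; summing gives $\mu(\{x\in A:f(x)>a\})\ge s\mu(A)$, so $a\notin S_f^s(A)$ and hence $m_f^s(A)\ge a$ by the half-line structure; letting $a\uparrow\inf_i m_f^s(A_i)$ finishes. I expect the one step needing genuine care to be the strict inequality in the upper bound of (x): it relies on the series $\sum_i d_i$ of positive deficits being truly positive and on $\sum_i s\mu(A_i)$ being finite, so that passing to a countable union does not erode strictness — discarding null pieces $A_i$ (for which $m_f^s(A_i)$ is undefined anyway) is what keeps this clean. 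The continuity hypothesis in (iv) is likewise needed only to match the two one-sided bounds at $\varphi(m)$.
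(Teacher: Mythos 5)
Your proofs of (iii) and (vi) and of the upper bound in (x) are essentially the paper's arguments: (iii) by comparing superlevel sets and taking the infimum, (vi) as the special case $\varphi(t)=ct$ of (iv), and the upper bound in (x) by summing the per-piece strict inequalities (your explicit ``deficit'' bookkeeping justifies the strictness of the summed inequality, which the paper simply asserts; it is a genuine point and your treatment is correct since $\sum_i s\mu(A_i)=s\mu(A)<\infty$). Where you diverge is in (iv) and in the lower bound of (x). For (iv) the paper gives a one-line identity: it writes $\varphi(m_f^s(A))$ as $\inf\{\varphi(a):\mu(\{x\in A:f(x)>a\})<s\mu(A)\}$, replaces $f(x)>a$ by $\varphi(f(x))>\varphi(a)$, and recognizes the result as $m_{\varphi\circ f}^s(A)$; you instead prove the two inequalities by approximating $\varphi(m)$ from below and above. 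Both glide over the same boundary technicalities (your ``checked directly'' cases should also cover the situation where $m_f^s(A)$ is isolated from below in $f[A]$, where your chosen $a<m$ with $\varphi(a)>b$ need not exist in the domain, though the superlevel-set inclusion still holds with the next admissible $a$); neither route is more rigorous than the other, the paper's is just terser. For the lower bound in (x) the paper estimates the sublevel sets $\mu(\{x\in A_i:f(x)<a\})$ against $(1-s)\mu(A_i)$ and invokes the $s$-median property, whereas you stay with superlevel sets and the infimum characterization ($a<m_f^s(A_i)$ forces $\mu(\{x\in A_i:f(x)>a\})\ge s\mu(A_i)$, and summing keeps $a$ outside the defining set for the union). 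Your route is in fact slightly cleaner: the paper's intermediate strict inequality $\mu(\{x\in A_i:f(x)<a\})<(1-s)\mu(A_i)$ is in general only a non-strict inequality (a two-valued example shows equality can occur), which is harmless for its conclusion but is a subtlety your argument avoids entirely.
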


\begin{proof}

(iii) If $a > m_f^{s/c}(A')$, then we have 
\[
\mu(\{x \in A: f(x) > a\}) \leq \mu(\{x \in A': f(x) > a\}) < \frac{s}{c} \mu(A') \leq s \mu(A) ,
\]
from which the claim follows.

(iv) It holds that
\begin{align*}
\varphi(m_f^s(A)) &= \inf \{\varphi(a) \in \mathbb{R} : \mu(\{x \in A: f(x) > a\}) < s \mu(A) \} \\
&=  \inf \{\varphi(a) \in \mathbb{R} : \mu(\{x \in A: \varphi(f(x)) > \varphi(a)\}) < s \mu(A) \} \\ &= m_{\varphi \circ f}^s(A) .
\end{align*}

(vi) Apply property (iv) for $\varphi(x) = cx$.

(x) For $a > \sup_i m_f^s(A_i)$, we have 
\begin{align*}
\mu\big(\big\{x \in \bigcup_{i=1}^\infty A_i: f(x) > a \big\}\big) &= \sum_{i=1}^\infty \mu(\{x \in A_i: f(x) > a\}) \\
&< \sum_{i=1}^\infty  s \mu(A_i) = s \mu\big(\bigcup_{i=1}^\infty A_i\big).
\end{align*}
It follows that $m_f^{s}\big(\bigcup_{i=1}^\infty A_i\big) \leq a$. Since this holds for every $a > \sup_i m_f^s(A_i)$, we have 
\[
m_f^{s}\big(\bigcup_{i=1}^\infty A_i\big) \leq \sup_i m_f^s(A_i).
\]
To prove the other inequality, assume that $a< \inf_i m_f^s(A_i)$. We then get
\begin{align*}
\mu\big(\big\{x \in \bigcup_{i=1}^\infty A_i: f(x) < a \big\}\big) &= \sum_{i=1}^\infty \mu(\{x \in A_i: f(x) < a\}) \\
&< \sum_{i=1}^\infty  (1-s) \mu(A_i) = (1-s) \mu\big(\bigcup_{i=1}^\infty A_i\big).
\end{align*}
This implies that $a \leq m_f^{s}\big(\bigcup_{i=1}^\infty A_i\big)$ for every $a< \inf_i m_f^s(A_i)$. Hence, we obtain
\[
\inf_i m_f^s(A_i) \leq m_f^{s}\big(\bigcup_{i=1}^\infty A_i\big).
\]

\end{proof}

\begin{remark}
Assume that $0<s\leq 1/2$. Then property (vii) assumes a slightly simpler form
\[
|m_{f}^s(A)| \leq m_{|f|}^{\min\{s,1-s\}}(A) = m_{|f|}^s(A) ,
\]
since 
\[
m_{|f|}^{1-s}(A) \leq m_{|f|}^s(A)
\]
for $0<s\leq 1/2$.
\end{remark}

Suppose that $f \in L_{\text{loc}}^1(X)$ and $0<s\leq1/2$. 
Using the Lebesgue differentiation theorem (Lemma~\ref{leb.diff.thm}) together with properties (v), (vii) and (ix) of the maximal $s$-medians, we obtain the following version of the Lebesgue differentiation theorem:
\begin{align*}
|m_f^s(B(x,r)) - f(x)| &= |m_{f-f(x)}^s(B(x,r))| \\
&\leq m_{|f-f(x)|}^s(B(x,r)) \\ &\leq s^{-1} \dashint_{B(x,r)} |f-f(x)| \dmu \to 0
\end{align*}
as $r \to 0$.
However, there is a more general version of the Lebesgue differentiation theorem for medians~\cite{medians} where we need to assume only that $f$ is a measurable function.

The proof of Lemma~\ref{leb.diff.medians} can be found in~\cite[Theorem~2.1]{medians} where the lemma is proven in the Euclidean setting. The proof is almost identical in metric measure spaces with a doubling measure, and thus is omitted here.

\begin{lemma}
\label{leb.diff.medians}
Let $f:X \rightarrow [-\infty, \infty]$ be a measurable function which is finite almost everywhere in $X$ and $0<s\leq 1$. Then it holds that
\[
\lim_{r \to 0} m_f^s(B(x,r)) = f(x)
\]
for $\mu$-almost every $x \in X$.

\end{lemma}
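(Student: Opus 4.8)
The plan is to reduce the general statement to the case of a \emph{bounded} function, for which the assertion already follows from the median--average comparison recorded just before the lemma together with the classical Lebesgue differentiation theorem (Lemma~\ref{leb.diff.thm}).

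Since the maximal $s$-median $m_f^s(A)$ is determined by the distribution function $a \mapsto \mu(\{x \in A : f(x) > a\})$ (Definition~\ref{maxmedian}), it does not change if $f$ is modified on a set of $\mu$-measure zero; as $f$ is finite $\mu$-almost everywhere, I may therefore assume $f(x) \in \mathbb{R}$ for every $x \in X$. I would then fix a strictly increasing continuous bijection $\varphi \colon [-\infty,\infty] \to [-\tfrac{\pi}{2}, \tfrac{\pi}{2}]$, for instance the natural extension of $\arctan$. The composition $\varphi \circ f$ is bounded and measurable, hence lies in $L^1_{\text{loc}}(X)$ because $\mu$ is finite on balls, and property (iv) of Lemma~\ref{medianprops} applies to $\varphi$ (strict monotonicity is exactly what makes the identity $\{f > a\} = \{\varphi \circ f > \varphi(a)\}$ used in its proof valid), giving $m_{\varphi \circ f}^s(A) = \varphi(m_f^s(A))$ for every admissible $A$.

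For a bounded $g \in L^1_{\text{loc}}(X)$ and any $0 < s \le 1$, I would argue exactly as in the computation preceding the lemma, but keeping $\min\{s, 1-s\}$ rather than assuming $s \le \tfrac12$: by properties (v), (vii) and (ix) of Lemma~\ref{medianprops},
\begin{align*}
|m_g^s(B(x,r)) - g(x)| &= |m_{g - g(x)}^s(B(x,r))| \le m_{|g - g(x)|}^{\min\{s,1-s\}}(B(x,r)) \\
&\le \frac{1}{\min\{s,1-s\}} \dashint_{B(x,r)} |g - g(x)| \dmu,
\end{align*}
and the right-hand side tends to $0$ as $r \to 0$ for $\mu$-almost every $x$ by Lemma~\ref{leb.diff.thm}; hence $m_g^s(B(x,r)) \to g(x)$ for $\mu$-a.e.\ $x$. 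Applying this to $g = \varphi \circ f$ produces a $\mu$-null set $N$ such that $\varphi(m_f^s(B(x,r))) = m_{\varphi\circ f}^s(B(x,r)) \to \varphi(f(x))$ for every $x \in X \setminus N$, and composing with the continuous inverse $\varphi^{-1}$ yields $m_f^s(B(x,r)) \to f(x)$ there, which is the claim.

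The one step needing genuine care is the reduction in the second paragraph: one must check that the extended $\arctan$ meets the hypotheses of property (iv) and that the limit may legitimately be pushed through $\varphi^{-1}$, which is fine because $\varphi$ is a homeomorphism of compact intervals and $f(x)$ is finite, so $\varphi(f(x))$ is an interior point of the target interval. Everything else is a routine adaptation of the Euclidean proof in \cite[Theorem~2.1]{medians}, which is why the author omits it.
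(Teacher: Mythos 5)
Your argument is correct on the range $0<s<1$, and it is a genuinely different route from the paper's. The paper offers no written proof: it cites \cite[Theorem~2.1]{medians} and asserts that the Euclidean argument there, which works directly with the measurable function (via the relative measure of its level sets in shrinking balls), adapts to doubling metric measure spaces. You instead reduce to the bounded, hence locally integrable, case by conjugating with the extended $\arctan$: property (iv) of Lemma~\ref{medianprops} transports medians through the homeomorphism, the computation displayed just before the lemma (properties (v), (vii), (ix) together with Lemma~\ref{leb.diff.thm}) handles the bounded composition with $\min\{s,1-s\}$ in place of $s$, and continuity of $\varphi^{-1}$ at the interior point $\varphi(f(x))$ recovers the claim. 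What this buys is self-containedness: the proof uses only tools already stated in the paper for metric spaces, so nothing from \cite{medians} needs to be re-proved in the doubling setting. The delicate points --- invariance of medians under modification on null sets, strict monotonicity in property (iv), finiteness of $f(x)$ so that the limit can be pulled back through $\varphi^{-1}$, and local integrability of the bounded composition --- are all addressed correctly.

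One caveat: your assertion that the bounded-case estimate works ``for any $0<s\le 1$'' fails at the endpoint $s=1$, where $\min\{s,1-s\}=0$ and properties (vii) and (ix) are unavailable (there is no $0$-median), so your proof covers only $0<s<1$. This is not a gap you could close, because the statement itself is false at $s=1$ under Definition~\ref{maxmedian}: one has $m_f^1(A)=\operatorname{ess\,inf}_A f$, and essential infima over shrinking balls need not converge to $f(x)$ almost everywhere. For instance, on $\mathbb{R}$ with Lebesgue measure take $f=-\chi_U$ with $U$ open, dense and of small measure; then $m_f^1(B(x,r))=-1$ for every ball, while $f=0$ on the complement of $U$, a set of positive measure. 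So the lemma's stated range should really be $0<s<1$ (the paper only ever applies it with parameters at most $1/2$), and your argument covers the entire range on which the statement is true; it would be worth making this restriction explicit rather than leaving ``$0<s\le 1$'' in the bounded-case step.
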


We give a definition for the median-type BMO which coincides with the classical BMO~\cite{johnmedian, stromberg, weightedhardy}.

\begin{definition}
We say that a measurable function $f$ belongs to BMO$_{0,s}(X)$ if
\[
\norm{f}_{\text{BMO}_{0,s}(X)} = \sup_{B \subset X} \inf_{c} m_{|f-c|}^s(B) < \infty .
\]
\end{definition}

\section{Definition and properties of $JN_{p,0,s}$}

In this section, we give a definition of the median-type John--Nirenberg space in metric measure spaces.
Moreover, we examine the basic properties of the space.

\begin{definition}
\label{Def_medJN}
Let $\Omega \subset X$ be an open set, $1<p<\infty$ and $0<s \leq 1/2$. We say that a measurable function $f$ belongs to the median-type John--Nirenberg space $JN_{p,0,s}(\Omega)$ if
\[
\norm{f}_{JN_{p,0,s}(\Omega)}^p = \sup \sum_{i=1}^{\infty} \mu(B_i) \lp \inf_{c_i \in \mathbb{R}} m_{|f-c_i|}^s (B_i) \rp^p < \infty ,
\]
where the supremum is taken over all countable collections of pairwise disjoint balls $B_i \subset \Omega$.
\end{definition}

The zero in the notation $JN_{p,0,s}$ means that we do not need to assume any local integrability but only measurability.
The range $0<s \leq 1/2$ is necessary since $\norm{f}_{JN_{p,0,s}} = 0$ for $s>1/2$ and a two-valued function $f$.

The next lemma shows that the constants $c_i$ in the definition of $JN_{p,0,s}$ can be replaced by the maximal $t$-medians where $s\leq t \leq \tfrac{1}{2}$.

\begin{lemma}
\label{lemmamedian1}
Let $f$ be a measurable function. It holds that
\[
\norm{f}_{JN_{p,0,s}(\Omega)}^p \leq \sup \sum_{i=1}^{\infty} \mu(B_i) \lp m_{|f-m_f^t(B_i)|}^s (B_i) \rp^p \leq 2^p \norm{f}_{JN_{p,0,s}(\Omega)}^p ,
\]
whenever $0<s\leq t \leq 1/2$.

\end{lemma}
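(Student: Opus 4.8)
The plan is to prove the two inequalities separately, both via the basic median properties from Lemma~\ref{medianprops}. The left inequality is essentially by definition: for each ball $B_i$, the maximal $t$-median $m_f^t(B_i)$ is a particular real constant, so it is one admissible choice of $c_i$ in the infimum $\inf_{c_i \in \mathbb{R}} m_{|f-c_i|}^s(B_i)$. Hence $\inf_{c_i} m_{|f-c_i|}^s(B_i) \le m_{|f-m_f^t(B_i)|}^s(B_i)$ for every $i$, and summing the weighted $p$-th powers over any admissible disjoint collection and taking the supremum gives $\norm{f}_{JN_{p,0,s}(\Omega)}^p \le \sup \sum_i \mu(B_i)\big(m_{|f-m_f^t(B_i)|}^s(B_i)\big)^p$.

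For the right inequality, fix a ball $B_i$ and an arbitrary $c_i \in \mathbb{R}$. I would estimate $m_{|f-m_f^t(B_i)|}^s(B_i)$ by inserting $c_i$: write $|f - m_f^t(B_i)| \le |f - c_i| + |c_i - m_f^t(B_i)|$ pointwise. Using property (v) (translation by the constant $|c_i - m_f^t(B_i)|$) together with property (ii) (monotonicity) gives
\[
m_{|f-m_f^t(B_i)|}^s(B_i) \le m_{|f-c_i|}^s(B_i) + |c_i - m_f^t(B_i)| .
\]
The remaining task is to bound $|c_i - m_f^t(B_i)|$ by $m_{|f-c_i|}^s(B_i)$. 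Here I would use that $m_f^t(B_i) - c_i = m_{f-c_i}^t(B_i)$ by property (v), and then property (vii), $|m_{f-c_i}^t(B_i)| \le m_{|f-c_i|}^{\min\{t,1-t\}}(B_i) = m_{|f-c_i|}^t(B_i)$ since $t \le 1/2$; finally property (i) with $s \le t$ gives $m_{|f-c_i|}^t(B_i) \le m_{|f-c_i|}^s(B_i)$. Combining, $m_{|f-m_f^t(B_i)|}^s(B_i) \le 2\, m_{|f-c_i|}^s(B_i)$. Taking the infimum over $c_i$, raising to the $p$-th power, multiplying by $\mu(B_i)$, summing over the disjoint collection and taking the supremum yields the factor $2^p$.

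The argument is short and the main (mild) obstacle is purely bookkeeping: one must be careful that medians are not subadditive on their own, so the splitting step genuinely relies on property (v) to absorb the additive constant and on the chain (vii)$\to$(i) to control the constant offset — and one must check that the hypothesis $0 < s \le t \le 1/2$ is exactly what makes both $\min\{t,1-t\} = t$ and the monotonicity $m^t \le m^s$ available. No doubling or covering results are needed; everything is pointwise on a single ball and then summed.
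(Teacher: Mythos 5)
Your proposal is correct and follows essentially the same route as the paper: the left inequality by choosing $c_i = m_f^t(B_i)$, and the right one by the pointwise triangle inequality combined with properties (ii), (v), (vii) and (i) of Lemma~\ref{medianprops} to get the factor $2$, then taking the infimum over $c_i$. No gaps.
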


\begin{proof}
It is clear that the first inequality holds. The other inequality follows from
\begin{align*}
m_{|f-m_{f}^t(B_i)|}^s(B_i) &\leq m_{|f-c_i| + |m_{f}^t(B_i)-c_i|}^s(B_i) \\ &= m_{|f-c_i|}^s(B_i) + |m_{f}^t(B_i)-c_i| \\
&= m_{|f-c_i|}^s(B_i) + |m_{f-c_i}^t(B_i)| \\
&\leq m_{|f-c_i|}^s(B_i) + m_{|f-c_i|}^t(B_i) \\
&\leq 2 m_{|f-c_i|}^s(B_i).
\end{align*}
By taking the infimum over the constants $c_i$, we observe that
\[
m_{|f-m_{f}^t(B_i)|}^s(B_i) \leq 2 \inf_{c_i} m_{|f-c_i|}^s(B_i) .
\]
Therefore, the second inequality is attained as well.

\end{proof}

Next, we list some basic properties of $JN_{p,0,s}$ spaces.

\begin{lemma}
\label{lemma5}
Let $f$ and $g$ be measurable functions. Then the following properties hold true:
\begin{enumerate}[(i), parsep=5pt, topsep=5pt]
    \item $\norm{f+g}_{JN_{p,0,s}} \leq \ \norm{f}_{JN_{p,0,t_1}} + \norm{g}_{JN_{p,0,t_2}}$ whenever $t_1 + t_2 \leq s$.
    \item $\norm{|f|}_{JN_{p,0,s}} \leq \norm{f}_{JN_{p,0,s}}$.
    \item If $2t_1 + 2t_2 \leq s$, then
    \begin{align*}
    \norm{\max\{f,g\}}_{JN_{p,0,s}} &\leq \norm{f}_{JN_{p,0,t_1}} + \norm{g}_{JN_{p,0,t_2}} , \\
    \norm{\min\{f,g\}}_{JN_{p,0,s}} &\leq \norm{f}_{JN_{p,0,t_1}} + \norm{g}_{JN_{p,0,t_2}} .
    \end{align*}
     
\end{enumerate}

\end{lemma}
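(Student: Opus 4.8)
The plan is to deduce all three properties from the subadditivity and monotonicity of maximal medians recorded in Lemma~\ref{medianprops}, together with the homogeneity identity $\norm{cf}_{JN_{p,0,s}} = \abs{c}\,\norm{f}_{JN_{p,0,s}}$ for $c \in \mathbb{R}$. This identity is immediate from Lemma~\ref{medianprops}(vi): for $c \neq 0$ we have $\abs{cf-c_i} = \abs{c}\,\abs{f-c_i/c}$, so $\inf_{c_i} m_{\abs{cf-c_i}}^s(B_i) = \abs{c}\inf_{c'} m_{\abs{f-c'}}^s(B_i)$, and the case $c=0$ is trivial; in particular $\norm{-g}_{JN_{p,0,s}} = \norm{g}_{JN_{p,0,s}}$.

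For (i), I would fix a countable collection of pairwise disjoint balls $B_i \subset \Omega$ and, given $\varepsilon > 0$, choose $a_i, b_i \in \mathbb{R}$ with
\[
\mu(B_i)\, m_{\abs{f-a_i}}^{t_1}(B_i)^p \le \mu(B_i)\Big(\inf_{c} m_{\abs{f-c}}^{t_1}(B_i)\Big)^p + \varepsilon 2^{-i}
\]
and the analogous estimate for $g$, $b_i$, $t_2$, which is possible since $\mu(B_i) < \infty$. From the pointwise bound $\abs{f+g-(a_i+b_i)} \le \abs{f-a_i} + \abs{g-b_i}$ and Lemma~\ref{medianprops}(ii), (viii) (the latter using $t_1 + t_2 \le s$),
\[
\inf_{c_i} m_{\abs{f+g-c_i}}^s(B_i) \le m_{\abs{f+g-(a_i+b_i)}}^s(B_i) \le m_{\abs{f-a_i}}^{t_1}(B_i) + m_{\abs{g-b_i}}^{t_2}(B_i).
\]
Raising to the power $p$, multiplying by $\mu(B_i)$, summing over $i$, and applying Minkowski's inequality in the weighted $\ell^p$ space with weights $\mu(B_i)$ gives
\[
\Big(\sum_i \mu(B_i)\big(\inf_{c_i} m_{\abs{f+g-c_i}}^s(B_i)\big)^p\Big)^{1/p} \le \big(\norm{f}_{JN_{p,0,t_1}}^p + \varepsilon\big)^{1/p} + \big(\norm{g}_{JN_{p,0,t_2}}^p + \varepsilon\big)^{1/p}.
\]
Letting $\varepsilon \to 0$ and then taking the supremum over all admissible collections yields (i). Property (ii) is even shorter: the reverse triangle inequality gives $\abs{\abs{f}-\abs{c}} \le \abs{f-c}$ pointwise, so Lemma~\ref{medianprops}(ii) gives $m_{\abs{\abs{f}-\abs{c}}}^s(B_i) \le m_{\abs{f-c}}^s(B_i)$ for every $c \in \mathbb{R}$, whence $\inf_{d \in \mathbb{R}} m_{\abs{\abs{f}-d}}^s(B_i) \le \inf_{c \in \mathbb{R}} m_{\abs{f-c}}^s(B_i)$, and one sums over $i$ and takes the supremum.

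For (iii), I would use the identities $\max\{f,g\} = \tfrac{1}{2}(f+g+\abs{f-g})$ and $\min\{f,g\} = \tfrac{1}{2}(f+g-\abs{f-g})$. Combining the homogeneity identity with two applications of (i) and one application of (ii) to the term $\abs{f-g}$ (and $\norm{-g}_{JN_{p,0,s}} = \norm{g}_{JN_{p,0,s}}$), one obtains, for any $a,b,c,d,e > 0$ with $a+b+c \le s$ and $d+e \le c$,
\[
\norm{\max\{f,g\}}_{JN_{p,0,s}} \le \tfrac{1}{2}\big(\norm{f}_{JN_{p,0,a}} + \norm{g}_{JN_{p,0,b}} + \norm{f}_{JN_{p,0,d}} + \norm{g}_{JN_{p,0,e}}\big),
\]
and the choice $a = d = t_1$, $b = e = t_2$, $c = t_1+t_2$ is admissible precisely when $2t_1 + 2t_2 \le s$, which gives the asserted bound; the argument for $\min\{f,g\}$ is identical. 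The only mildly delicate point in the whole proof is this parameter bookkeeping in (iii): after discarding the absolute value in $\abs{f-g}$ one must split it once more between $f$ and $g$, so each of $f$ and $g$ is charged twice, which is exactly the origin of the factor $2$ in the hypothesis $2t_1 + 2t_2 \le s$. Beyond that accounting I do not expect any real obstacle.
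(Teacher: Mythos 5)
Your proposal is correct and follows essentially the same route as the paper: (i) via the median subadditivity property (viii) of Lemma~\ref{medianprops} combined with Minkowski's inequality, (ii) via the reverse triangle inequality and monotonicity, and (iii) via the identities $\max\{f,g\}=\tfrac12(f+g+|f-g|)$, $\min\{f,g\}=\tfrac12(f+g-|f-g|)$ together with (i), (ii) and homogeneity. Your $\varepsilon 2^{-i}$ selection of near-optimal constants is just a slightly more explicit justification of the paper's step of taking the infimum over the constants $c_i^f, c_i^g$ after Minkowski; otherwise the arguments coincide, including the parameter bookkeeping that produces the hypothesis $2t_1+2t_2\le s$ in (iii).
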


\begin{proof}

(i) Using property (viii) of Lemma~\ref{medianprops} and Minkowski's inequality, we estimate
\begin{align*}
& \left[ \sum_{i=1}^{\infty} \mu(B_i) \lp \inf_{c_i} m_{|f+g-c_i|}^s (B_i) \rp^p \right]^\frac{1}{p} \\ 
&\leq \left[ \sum_{i=1}^{\infty} \mu(B_i) \lp  m_{|f+g - c_i^f - c_i^g|}^s (B_i) \rp^p \right]^\frac{1}{p} \\ 
&\leq \left[ \sum_{i=1}^{\infty} \mu(B_i) \lp  m_{|f- c_i^f|}^{t_1} (B_i) + m_{|g - c_i^g|}^{t_2} (B_i) \rp^p \right]^\frac{1}{p} \\ 
&= \left[ \int_{\Omega} \lp  \sum_{i=1}^{\infty} \chi_{ B_i}(x) \, m_{|f- c_i^f|}^{t_1} (B_i) + \sum_{i=1}^{\infty} \chi_{ B_i}(x) \, m_{|g - c_i^g|}^{t_2} (B_i) \rp^p \dmu(x) \right]^\frac{1}{p} \\ 
&\leq \left[ \int_{\Omega} \lp  \sum_{i=1}^{\infty} \chi_{ B_i}(x) \, m_{|f- c_i^f|}^{t_1} (B_i) \rp^p \dmu(x) \right]^\frac{1}{p} + \left[ \int_{\Omega} \lp \sum_{i=1}^{\infty} \chi_{B_i}(x) \, m_{|g - c_i^g|}^{t_2} (B_i) \rp^p \dmu(x) \right]^\frac{1}{p} \\ 
&= \left[ \sum_{i=1}^{\infty} \mu(B_i) \lp m_{|f- c_i^f|}^{t_1} (B_i) \rp^p \right]^\frac{1}{p} + \left[ \sum_{i=1}^{\infty} \mu(B_i) \lp m_{|g - c_i^g|}^{t_2} (B_i) \rp^p \right]^\frac{1}{p}
\end{align*}
for any $c_i^f, c_i^g \in \mathbb{R}$.
Thus, we can take the infimum over the constants $c_i^f$ and $c_i^g$ to get
\begin{align*}
\norm{f+g}_{JN_{p,0,s}} \leq \ \norm{f}_{JN_{p,0,t_1}} + \norm{g}_{JN_{p,0,t_2}} ,
\end{align*}
where $t_1 + t_2 \leq s$.

(ii) The triangle inequality together with Lemma~\ref{medianprops}~(ii) implies
\begin{align*}
\sum_{i=1}^{\infty} \mu(B_i) \lp \inf_{c_i} m_{||f|-c_i|}^s (B_i) \rp^p &\leq \sum_{i=1}^{\infty} \mu(B_i) \lp \inf_{c_i} m_{||f|-|c_i||}^s (B_i) \rp^p \\
&\leq \sum_{i=1}^{\infty} \mu(B_i) \lp \inf_{c_i} m_{|f-c_i|}^s (B_i) \rp^p .
\end{align*}
Therefore, we obtain the claim
\[
\norm{|f|}_{JN_{p,0,s}} \leq \norm{f}_{JN_{p,0,s}} .
\]

(iii) Note that $\max\{f,g\} = \tfrac{1}{2}(f+g+|f-g|)$ and $\min\{f,g\} = \tfrac{1}{2}(f+g-|f-g|)$.
Then using (i) and (ii), we get
\begin{align*}
\norm{\max\{f,g\}}_{JN_{p,0,s}} &\leq \frac{1}{2} \lp \norm{f+g}_{JN_{p,0,t_1 + t_2}} + \norm{|f-g|}_{JN_{p,0,t_1 + t_2}} \rp \\
&\leq \frac{1}{2} \lp \norm{f}_{JN_{p,0,t_1}} + \norm{g}_{JN_{p,0,t_2}} + \norm{f-g}_{JN_{p,0,t_1 + t_2}} \rp \\
&\leq \frac{1}{2} \lp \norm{f}_{JN_{p,0,t_1}} + \norm{g}_{JN_{p,0,t_2}} + \norm{f}_{JN_{p,0,t_1}} + \norm{g}_{JN_{p,0,t_2}} \rp \\
&= \norm{f}_{JN_{p,0,t_1}} + \norm{g}_{JN_{p,0,t_2}} ,
\end{align*}
where $2t_1 + 2t_2 \leq s$.
The claim for $\min\{f,g\}$ follows similarly.\qedhere

\end{proof}

The next proposition tells that the space $L^p$ is contained in $JN_{p,q}$ which in turn is a subset of $JN_{p,0,s}$. 
The first inclusion is strict in the Euclidean setting, that is, there exists a function in $JN_p \setminus L^p$~\cite{korte}.
The second one holds in the other direction in many situations; see Corollary~\ref{equivalence_Boman}.

\begin{proposition}
\label{LpinJNp}
Let $1 < p< \infty$, $0 < q < p$ and $0 < s \leq 1/2$.
It holds that $L^p(\Omega) \subset JN_{p,q}(\Omega) \subset JN_{p,0,s}(\Omega)$, particularly
\[
s^\frac{1}{q} \norm{f}_{JN_{p,0,s}(\Omega)} \leq \norm{f}_{JN_{p,q}(\Omega)} \leq \norm{f}_{L^p(\Omega)} .
\]

\end{proposition}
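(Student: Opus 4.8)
The plan is to treat the two inclusions separately; each will reduce to a single ball-wise estimate that is summed over an arbitrary pairwise disjoint collection of balls $B_i \subset \Omega$ and then supremized, exactly as in Definitions~\ref{JNp} and~\ref{Def_medJN}.

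\emph{The inclusion $JN_{p,q}(\Omega) \subset JN_{p,0,s}(\Omega)$.} The key tool is property (ix) of Lemma~\ref{medianprops} with exponent $q$, applied to $f-c$ on a ball $B \subset \Omega$: if $f \in L^q(B)$, then for every $c \in \mathbb{R}$
\[
m_{|f-c|}^s(B) \leq \lp s^{-1} \dashint_B |f-c|^q \dmu \rp^{1/q} = s^{-1/q} \lp \dashint_B |f-c|^q \dmu \rp^{1/q},
\]
while if $f \notin L^q(B)$ then $\dashint_B |f-c|^q \dmu = \infty$ for all $c$ and the bound is vacuous. Since $u \mapsto u^{1/q}$ is continuous and increasing, taking the infimum over $c$ gives $\inf_c m_{|f-c|}^s(B) \leq s^{-1/q}\bigl(\inf_c \dashint_B |f-c|^q \dmu\bigr)^{1/q}$. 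Raising this to the power $p$, multiplying by $\mu(B)$, summing over a pairwise disjoint family $\{B_i\} \subset \Omega$, and taking the supremum yields
\[
\norm{f}_{JN_{p,0,s}(\Omega)}^p \leq s^{-p/q} \norm{f}_{JN_{p,q}(\Omega)}^p ,
\]
which is the first asserted inequality.

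\emph{The inclusion $L^p(\Omega) \subset JN_{p,q}(\Omega)$.} First, $f \in L^p(\Omega)$ implies $f \in L_{\text{loc}}^q(\Omega)$ by Hölder's inequality on sets of finite measure (using $q < p$), so the condition defining $JN_{p,q}(\Omega)$ is meaningful. For a ball $B \subset \Omega$, take $c = 0$ and apply Jensen's inequality to the convex function $u \mapsto u^{p/q}$ (here $p/q > 1$):
\[
\lp \inf_{c} \dashint_B |f-c|^q \dmu \rp^{p/q} \leq \lp \dashint_B |f|^q \dmu \rp^{p/q} \leq \dashint_B |f|^p \dmu = \frac{1}{\mu(B)} \int_B |f|^p \dmu .
\]
Multiplying by $\mu(B)$ and summing over a pairwise disjoint family $\{B_i\} \subset \Omega$, disjointness gives $\sum_i \int_{B_i} |f|^p \dmu \leq \int_\Omega |f|^p \dmu$, hence $\norm{f}_{JN_{p,q}(\Omega)}^p \leq \norm{f}_{L^p(\Omega)}^p$. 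Combining the two displays gives the full chain of inequalities, and in particular $L^p(\Omega) \subset JN_{p,0,s}(\Omega)$.

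I do not anticipate a genuine obstacle: both steps are soft consequences of Jensen's inequality and of Lemma~\ref{medianprops}(ix). The only points that need a little care are the interchange of the infimum over $c$ with the increasing map $u \mapsto u^{1/q}$, and the remark that a ball on which $f$ is not $q$-integrable contributes trivially, so that property (ix) is invoked only where it applies.
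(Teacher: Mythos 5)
Your proof is correct and follows essentially the same route as the paper: the first inequality via Lemma~\ref{medianprops}~(ix) applied to $|f-c|$ with exponent $q$, and the second via Jensen/H\"older on each ball with $c_i=0$ together with disjointness. The extra care you take (passing the infimum over $c$ through $u\mapsto u^{1/q}$, and dismissing balls where $f\notin L^q$) only makes explicit what the paper leaves implicit.
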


\begin{proof}

The first inequality follows straightforwardly from property (ix) of Lemma~\ref{medianprops}.
The second one is obtained by a simple use of H\"{o}lder's inequality:
\begin{align*}
\sum_{i=1}^\infty \mu(B_i) \lp \inf_{c_i} \dashint_{B_i} |f-c_i| \dmu \rp^p
&\leq \sum_{i=1}^\infty \mu(B_i) \lp \dashint_{B_i} |f| \dmu \rp^p \\
&\leq \sum_{i=1}^\infty \mu(B_i) \dashint_{B_i} |f|^p \dmu \\
&\leq \int_{\Omega} |f|^p \dmu .
\end{align*}
By taking the supremum over all collections of pairwise disjoint balls $B_i \subset \Omega$, we get
\[
\norm{f}_{JN_{p,q}(\Omega)} \leq \norm{f}_{L^p(\Omega)} .
\]

\end{proof}

\begin{remark}
If $f \in \text{BMO}_{0,s}(\Omega)$ with $\mu(\Omega) < \infty$, then it holds that $f \in JN_{p,0,s}(\Omega)$.
More precisely, we have
\[
\norm{f}_{JN_{p,0,s}(\Omega)} \leq \mu(\Omega)^\frac{1}{p} \norm{f}_{\text{BMO}_{0,s}(\Omega)} .
\]
The previous inequality follows from the estimates
\begin{align*}
\sum_{i=1}^{\infty} \mu(B_i) \lp \inf_{c_i} m_{|f-c_i|}^s (B_i) \rp^p
\leq \sum_{i=1}^\infty \mu\lp B_i\rp \norm{f}_{\text{BMO}_{0,s}}^p \leq \mu(\Omega) \norm{f}_{\text{BMO}_{0,s}}^p .
\end{align*}

\end{remark}

The median-type John--Nirenberg space $JN_{p,0,s}$ is a generalization of BMO in the sense that a function is in BMO if and only if the $JN_{p,0,s}$ norm of the function is uniformly bounded as $p$ tends to infinity.

\begin{proposition}
If $\Omega \subset X$ has finite measure, then it holds that
\[
\lim_{p \to \infty} \norm{f}_{JN_{p,0,s}(\Omega)} = \norm{f}_{\text{\normalfont BMO}_{0,s}(\Omega)} .
\]

\end{proposition}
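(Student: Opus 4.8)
The strategy is to prove the two inequalities $\limsup_{p\to\infty}\norm{f}_{JN_{p,0,s}(\Omega)} \le \norm{f}_{\mathrm{BMO}_{0,s}(\Omega)}$ and $\liminf_{p\to\infty}\norm{f}_{JN_{p,0,s}(\Omega)} \ge \norm{f}_{\mathrm{BMO}_{0,s}(\Omega)}$ separately, treating the degenerate cases ($\norm{f}_{\mathrm{BMO}_{0,s}(\Omega)}=0$ or $=\infty$) along the way. First I would observe that the remark preceding the proposition already gives, for every $p$,
\[
\norm{f}_{JN_{p,0,s}(\Omega)} \le \mu(\Omega)^{1/p}\,\norm{f}_{\mathrm{BMO}_{0,s}(\Omega)},
\]
and since $\mu(\Omega)<\infty$ we have $\mu(\Omega)^{1/p}\to 1$ as $p\to\infty$ (if $\mu(\Omega)=0$ both sides are trivially $0$). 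This immediately yields the upper bound $\limsup_{p\to\infty}\norm{f}_{JN_{p,0,s}(\Omega)} \le \norm{f}_{\mathrm{BMO}_{0,s}(\Omega)}$; in particular the limit is $0$ when $f\in\mathrm{BMO}_{0,s}(\Omega)$ has zero norm.

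For the lower bound, fix any ball $B\subset\Omega$ and any $c\in\mathbb{R}$. Using the single-ball collection $\{B\}$ in Definition~\ref{Def_medJN} gives
\[
\norm{f}_{JN_{p,0,s}(\Omega)} \ge \mu(B)^{1/p}\, m_{|f-c|}^s(B).
\]
Letting $p\to\infty$ (and using $\mu(B)>0$, so $\mu(B)^{1/p}\to 1$) yields $\liminf_{p\to\infty}\norm{f}_{JN_{p,0,s}(\Omega)} \ge m_{|f-c|}^s(B)$. Since this holds for every $c\in\mathbb{R}$ and every ball $B\subset\Omega$, taking the infimum over $c$ and then the supremum over $B$ gives exactly $\liminf_{p\to\infty}\norm{f}_{JN_{p,0,s}(\Omega)} \ge \norm{f}_{\mathrm{BMO}_{0,s}(\Omega)}$, with the convention that if the supremum is $+\infty$ then the $JN_{p,0,s}$ norms are unbounded. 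Combining the two bounds gives the existence of the limit and the claimed equality.

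I do not expect a serious obstacle here; the proposition is essentially a soft consequence of the preceding remark together with the elementary fact $\mu(A)^{1/p}\to 1$ for $0<\mu(A)<\infty$. The only points requiring a little care are (a) making sure the argument degrades gracefully in the edge cases ($\mu(\Omega)=0$, $\norm{f}_{\mathrm{BMO}_{0,s}(\Omega)}\in\{0,\infty\}$), and (b) being careful that in the lower-bound step the single ball $B$ indeed ranges over all balls contained in $\Omega$, which is exactly the supremum defining $\mathrm{BMO}_{0,s}(\Omega)$. Everything else is routine.
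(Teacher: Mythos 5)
Your argument is correct, with one slip in the lower bound as written: the single-ball collection $\{B\}$ in Definition~\ref{Def_medJN} gives $\norm{f}_{JN_{p,0,s}(\Omega)} \geq \mu(B)^{1/p}\,\inf_{c\in\mathbb{R}} m_{|f-c|}^s(B)$, not $\mu(B)^{1/p}\, m_{|f-c|}^s(B)$ for an arbitrary fixed $c$ (for $c$ far from a median of $f$ on $B$ the latter inequality fails). Since you take the infimum over $c$ immediately afterwards, the conclusion you actually use, namely $\liminf_{p\to\infty}\norm{f}_{JN_{p,0,s}(\Omega)} \geq \inf_{c} m_{|f-c|}^s(B)$ for every ball $B\subset\Omega$, is exactly what the corrected inequality yields, so the proof goes through after this cosmetic fix. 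Your route is genuinely different from the paper's: the paper fixes a countable disjoint collection $\{B_i\}$, rewrites $\sum_i \mu(B_i)\big(\inf_{c_i} m_{|f-c_i|}^s(B_i)\big)^p$ as the $p$-th power of the $L^p(\Omega)$ norm of the simple function $\sum_i \chi_{B_i}\inf_{c_i} m_{|f-c_i|}^s(B_i)$, uses $\norm{g}_{L^p(\Omega)}\to\norm{g}_{L^\infty(\Omega)}$ on the finite-measure set $\Omega$ to identify the limit for that fixed collection with $\sup_i \inf_{c_i} m_{|f-c_i|}^s(B_i)$, and then interchanges the supremum over collections with the limit in $p$, justifying the interchange by monotonicity of the expression in $p$. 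Your two-sided squeeze --- the preceding remark together with $\mu(\Omega)^{1/p}\to 1$ for the upper bound, and single-ball collections together with $\mu(B)^{1/p}\to 1$ for the lower bound --- avoids that interchange entirely and needs no $L^p$-to-$L^\infty$ limit, so it is more elementary and handles the degenerate cases $\norm{f}_{\text{BMO}_{0,s}(\Omega)}\in\{0,\infty\}$ transparently; what the paper's computation buys in exchange is the finer statement that for each fixed collection the corresponding quantity converges to the BMO-type supremum over that collection, from which the proposition follows.
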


\begin{proof}
Let $\{ B_i \}_i$ be a collection of pairwise disjoint balls contained in $\Omega$. 
Remind that if $\mu(A) < \infty$, then $\norm{g}_{L^p(A)} \to \norm{g}_{L^\infty(A)}$ as $p \to \infty$.
By using this, it follows that
\begin{align*}
\lp \sum_{i=1}^{\infty} \mu\lp B_i\rp \lp \inf_{c_i} m_{|f-c_i|}^s (B_i) \rp^p \rp^{\frac{1}{p}}
&= \lp \int_{\Omega} \lp \sum_{i=1}^{\infty} \chi_{ B_i}(x)  \inf_{c_i} m_{|f-c_i|}^s (B_i)  \rp^p \dmu(x) \rp^{\frac{1}{p}} \\
&\to \sup_{x \in \Omega} \sum_{i=1}^{\infty} \chi_{ B_i}(x)  \inf_{c_i} m_{|f-c_i|}^s (B_i)  \\
&= \sup_{i} \inf_{c_i} m_{|f-c_i|}^s (B_i) 
\end{align*}
as $p \to \infty$. Hence, we have
\begin{align*}
\sup_{\{ B_i \}} \lim_{p \to \infty} \lp \sum_{i=1}^{\infty} \mu\lp B_i\rp \lp \inf_{c_i} m_{|f-c_i|}^s (B_i) \rp^p \rp^{\frac{1}{p}}
&= \sup_{\{ B_i \}} \sup_{i} \inf_{c_i} m_{|f-c_i|}^s (B_i)  \\
&= \sup_{B \subset \Omega} \inf_{c} m_{|f-c|}^s (B)  \\
&= \norm{f}_{\text{BMO}_{0,s}} .
\end{align*}
We can interchange the order of taking the supremum and the limit since 
\[
\lp \int_{\Omega} \lp \sum_{j=1}^{\infty} \chi_{ B_j}(x)  \inf_{c_j} m_{|f-c_j|}^s (B_j)  \rp^p \dmu(x) \rp^{\frac{1}{p}}
\]
is an increasing function of $p$ which can be seen by H\"{o}lder's inequality.
Thus, we conclude that
\[
\lim_{p \to \infty} \norm{f}_{JN_{p,0,s}} = \norm{f}_{\text{BMO}_{0,s}} .
\]

\end{proof}

\section{John--Nirenberg lemma for $JN_{p,0,s}$}

We need two lemmas to prove the John--Nirenberg inequality for $JN_{p,0,s}$ which implies that $JN_{p,0,s}$ is contained in $L^{p,\infty}$, that is, weak $L^p$.
The first lemma is a Calder\'{o}n-Zygmund decomposition and the second one is a good-$\lambda$ inequality.

Throughout the argument let $\eta > 0$ and $B_0 = B(x_{B_0},r_{B_0}) \subset X$ be fixed. We denote
$\widehat{B}_0 = (1+\eta) B_0$,
\[
\mathcal{B} = \{B(x_B,r_B) : x_B \in B_0, r_B \leq \eta r_{B_0} \}
\]
and
\begin{equation}
\label{alpha}
\alpha = 5^D c_\mu^2 \lp 1 + \frac{1}{\eta} \rp^D ,
\end{equation}
where $c_\mu$ is the doubling constant and $D = \log_2 c_\mu$ is the doubling dimension.
We define a maximal function
\[
M_{\mathcal{B}}f(x) = \sup_{\substack{B \ni x \\ B \in \mathcal{B}}} m_{|f|}^{t}(B)
\]
with the understanding that $M_\mathcal{B} f(x) = 0$ if there is no ball $B \in \mathcal{B}$ such that $x \in B$. 
In particular, $M_\mathcal{B} f(x) = 0$ for every $x \in X \setminus \widehat{B}_0$. By the Lebesgue differentiation theorem for medians (Lemma~\ref{leb.diff.medians}), we have $|f(x)| \leq M_{\mathcal{B}}f(x)$ for $\mu$-almost every $x \in B_0$.
Moreover, denote 
\[
E_\lambda = E_\lambda^f = \{ x \in \widehat{B}_0 : M_{\mathcal{B}}f(x) > \lambda \} .
\]

\begin{lemma}
\label{lemma2}
If $B \in \mathcal{B}$ such that $m_{f}^{t}(B) >  m_{f}^{t/\alpha}(\widehat{B}_0)$, then it holds that $ r_B \leq \frac{\eta}{5} r_{B_0}$.
\end{lemma}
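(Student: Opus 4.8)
The plan is to argue by contradiction. Suppose $B = B(x_B,r_B)\in\mathcal B$ satisfies both $m_f^t(B) > m_f^{t/\alpha}(\widehat B_0)$ and $r_B > \tfrac{\eta}{5}r_{B_0}$, and derive a contradiction. The tool to exploit is the ``downward monotonicity'' of maximal medians under set inclusion, i.e. property (iii) of Lemma~\ref{medianprops}: if $A\subset A'$ and $\mu(A')\le c\,\mu(A)$ with $c\ge 1$, then $m_f^t(A)\le m_f^{t/c}(A')$. I will apply this with $A=B$, $A'=\widehat B_0$ and $c=\alpha$, so that the conclusion $m_f^t(B)\le m_f^{t/\alpha}(\widehat B_0)$ contradicts the assumption of the lemma.

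First I would check the inclusion $B\subset\widehat B_0$, which in fact holds for \emph{every} $B\in\mathcal B$: since $x_B\in B_0$ (so $d(x_B,x_{B_0})<r_{B_0}$) and $r_B\le\eta r_{B_0}$, the triangle inequality gives $d(z,x_{B_0}) < r_B + r_{B_0}\le(1+\eta)r_{B_0}$ for every $z\in B$, hence $z\in\widehat B_0$.

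The second ingredient is the measure comparison, and this is where the precise value of $\alpha$ in~\eqref{alpha} enters. Since $x_B\in B_0\subset\widehat B_0 = B(x_{B_0},(1+\eta)r_{B_0})$ and $0<r_B\le\eta r_{B_0}<(1+\eta)r_{B_0}$, the doubling consequence recalled in Section~2 (applied with $x=x_{B_0}$, $R=(1+\eta)r_{B_0}$, $y=x_B$, $r=r_B$) yields
\[
\frac{\mu(\widehat B_0)}{\mu(B)} \le c_\mu^2\left(\frac{(1+\eta)r_{B_0}}{r_B}\right)^D .
\]
Now the standing assumption $r_B>\tfrac{\eta}{5}r_{B_0}$ gives $\tfrac{(1+\eta)r_{B_0}}{r_B} < 5\bigl(1+\tfrac1\eta\bigr)$, whence $\mu(\widehat B_0) < 5^D c_\mu^2\bigl(1+\tfrac1\eta\bigr)^D\mu(B) = \alpha\,\mu(B)$. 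As $\alpha>1$, Lemma~\ref{medianprops}(iii) applies and gives $m_f^t(B)\le m_f^{t/\alpha}(\widehat B_0)$, contradicting the hypothesis; therefore $r_B\le\tfrac{\eta}{5}r_{B_0}$.

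Every step here is an elementary triangle-inequality or doubling estimate, so there is no genuine obstacle; the only point requiring some care is to invoke the doubling inequality with $\widehat B_0$ kept centered at $x_{B_0}$ rather than recentering it at $x_B$, since recentering would enlarge the radius ratio and produce a constant strictly larger than $\alpha$.
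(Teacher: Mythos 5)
Your proof is correct and is essentially the paper's argument in contrapositive form: the paper deduces $\alpha\,\mu(B)\le\mu(\widehat B_0)$ directly from the median hypothesis (in effect reproving Lemma~\ref{medianprops}(iii) inline) and then applies exactly the doubling comparison you use, whereas you assume $r_B>\tfrac{\eta}{5}r_{B_0}$, obtain $\mu(\widehat B_0)<\alpha\,\mu(B)$ from that same doubling estimate, and invoke Lemma~\ref{medianprops}(iii) to contradict $m_f^t(B)>m_f^{t/\alpha}(\widehat B_0)$. The key inequality and the role of the constant $\alpha$ from~\eqref{alpha} are identical, so only the logical packaging differs.
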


\begin{proof}
By the assumption, we have
\begin{align*}
t \mu(B) \leq \mu\{ x \in B: f \geq m_{f}^{t}(B) \} \leq \mu\{ x \in \widehat{B}_0: f > m_{f}^{t/ \alpha}(\widehat{B}_0) \} \leq \frac{t}{\alpha} \mu(\widehat{B}_0) .
\end{align*}
Therefore, it holds that
\[
\alpha \leq \frac{\mu(\widehat{B}_0)}{\mu(B)} \leq c_\mu^2 \lp \frac{r_{\widehat{B}_0}}{r_B} \rp^D = c_\mu^2 (1+\eta)^D \lp \frac{r_{B_0}}{r_B} \rp^D ,
\]
which implies $ r_B \leq \frac{\eta}{5} r_{B_0} $ by recalling~\eqref{alpha}.

\end{proof}

The following lemma is a Calder\'{o}n-Zygmund decomposition for medians in metric measure spaces with a doubling measure.

\begin{lemma}
\label{lemma3}
Let $f\geq 0$ be a measurable function defined on $\widehat{B}_0$.
Assume that $E_\lambda \neq \emptyset $ and
\begin{equation*}
m_{f}^{t/\alpha}(\widehat{B}_0) \leq \lambda
\end{equation*}
holds for some $0<t\leq 1$, where $\alpha$ is given in~\eqref{alpha}.
Then there exist countably many pairwise disjoint balls $B_i \in \mathcal{B} $ such that
\begin{enumerate}[(i),topsep=5pt,itemsep=5pt]
\item $ \bigcup_i B_i \subset E_\lambda \subset \bigcup_i 5B_i $,

\item $r_{B_i} \leq \frac{\eta}{5} r_{B_0}$,

\item $ m_{f}^{t}(B_i) > \lambda$,

\item $m_{f}^{t}(\sigma B_i) \leq \lambda$ whenever $ \sigma \geq 2$ and $\sigma B_i \in \mathcal{B}$.

\end{enumerate}
The collection of balls $\{B_i\}_i$ is called the Calder\'{o}n-Zygmund balls $B_{i,\lambda}$ at level $\lambda$. Furthermore, if $m_{f}^{t/\alpha}(\widehat{B}_0) \leq \lambda' \leq \lambda$, then it is possible to choose Calder\'{o}n-Zygmund balls $ B_{j,\lambda'} $ at level $\lambda'$ in a manner that for each $B_{i,\lambda}$ we can find $B_{j, \lambda'}$ such that
$
B_{i,\lambda} \subset 5 B_{j,\lambda'} .
$

\end{lemma}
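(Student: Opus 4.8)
The plan is to run the classical Calder\'on--Zygmund stopping-time argument, with the maximal $t$-median $m_f^{t}$ playing the role usually played by the integral average and with Lemma~\ref{lemma2} supplying the size control on the stopping balls; medians enter only through their defining inequalities (and through Lemma~\ref{lemma2}), so their failure of monotonicity on sets and of subadditivity does not interfere. Concretely, since $f\geq 0$, for each $x\in E_\lambda$ there is a ball $B\in\mathcal B$ with $x\in B$ and $m_f^{t}(B)=m_{|f|}^{t}(B)>\lambda$, so
\[
\rho(x)=\sup\{\, r_B : B\in\mathcal B,\ x\in B,\ m_f^{t}(B)>\lambda \,\}
\]
is positive, and by Lemma~\ref{lemma2} (applicable because $m_f^{t/\alpha}(\widehat B_0)\leq\lambda$ forces $m_f^{t}(B)>m_f^{t/\alpha}(\widehat B_0)$ for every admissible $B$) it satisfies $\rho(x)\leq\tfrac{\eta}{5}r_{B_0}$. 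For each $x\in E_\lambda$ I would pick $B_x\in\mathcal B$ with $x\in B_x$, $m_f^{t}(B_x)>\lambda$ and $r_{B_x}>\tfrac12\rho(x)$. The family $\{B_x\}_{x\in E_\lambda}$ has uniformly bounded radii, so the $5$-covering lemma produces a countable pairwise disjoint subcollection $\{B_i\}=\{B_{x_i}\}$ with $E_\lambda\subset\bigcup_{x\in E_\lambda}B_x\subset\bigcup_i 5B_i$.

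\emph{Properties (i)--(iv).} Every $B\in\mathcal B$ lies in $\widehat B_0$, since its centre is in $B_0$ and its radius is at most $\eta r_{B_0}$. Hence if $y\in B_i$ then $y\in\widehat B_0$ and $M_{\mathcal B}f(y)\geq m_f^{t}(B_i)>\lambda$, so $y\in E_\lambda$; this gives $\bigcup_i B_i\subset E_\lambda$, and $E_\lambda\subset\bigcup_i 5B_i$ was just noted, which is (i). Property (iii) is the choice of $B_{x_i}$, and (ii) is Lemma~\ref{lemma2} applied to $B_i$. For (iv), assume $\sigma\geq 2$, $\sigma B_i\in\mathcal B$ and $m_f^{t}(\sigma B_i)>\lambda$; since $x_i\in B_i\subset\sigma B_i$, the ball $\sigma B_i$ is admissible in the supremum defining $\rho(x_i)$, whence $\rho(x_i)\geq\sigma r_{B_i}\geq 2r_{B_i}>\rho(x_i)$, a contradiction.

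\emph{The nesting property.} Let $m_f^{t/\alpha}(\widehat B_0)\leq\lambda'\leq\lambda$. Then $E_\lambda\subset E_{\lambda'}$ and every ball with $m_f^{t}(\cdot)>\lambda$ also has $m_f^{t}(\cdot)>\lambda'$, so each $B_{i,\lambda}$ is an admissible stopping ball at level $\lambda'$. I would build the level-$\lambda'$ decomposition by applying the $5$-covering lemma to the candidate family consisting of the balls $B_x^{\lambda'}$ (selected as above, with the supremum taken at level $\lambda'$) together with, for each $i$, a \emph{concentric enlargement} $B_i'\supset B_{i,\lambda}$ whose radius exceeds half the supremum of the radii of admissible concentric balls at level $\lambda'$ centred at $x_{B_{i,\lambda}}$; such a $B_i'$ exists because $B_{i,\lambda}$ itself is admissible, so the supremum is at least $r_{B_{i,\lambda}}$ and one may take $r_{B_i'}\geq r_{B_{i,\lambda}}$. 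The resulting disjoint subfamily $\{B_{j,\lambda'}\}$ satisfies (i)--(iv) at level $\lambda'$ by the same arguments, the maximality needed for (iv) being encoded in $\rho_{\lambda'}(x)$ for the balls $B_x^{\lambda'}$ and in the radius of $B_i'$ for the enlarged balls. Since the proof of the $5$-covering lemma in fact places every ball of the candidate family inside $5B$ for some selected ball $B$, each $B_i'$, and hence $B_{i,\lambda}\subset B_i'$, lies in $5B_{j,\lambda'}$ for a suitable $j$, which is the claimed nesting.

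\emph{Main obstacle.} The only genuinely delicate point is property (iv): it is what forces the stopping balls to be chosen with near-maximal radius rather than taken from an arbitrary cover, and it is also what prevents one from simply adjoining the balls $B_{i,\lambda}$ to the level-$\lambda'$ family, since a selected $B_{i,\lambda}$ need not be maximal at level $\lambda'$. Passing instead to the maximal concentric enlargement $B_i'$ repairs this while retaining the inclusion $B_{i,\lambda}\subset 5B_{j,\lambda'}$. Everything else is routine bookkeeping with the doubling measure and the $5$-covering lemma.
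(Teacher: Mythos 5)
Your construction at level $\lambda$ is exactly the paper's: the stopping radius $r_x(\lambda)$, a ball through $x$ of more than half that radius with $m_f^{t}>\lambda$, Lemma~\ref{lemma2} for the bound $r_B\leq\tfrac{\eta}{5}r_{B_0}$, and the $5$-covering theorem, with (iv) following from near-maximality of the radius; so that part is the same argument, correctly carried out. The difference is in the ``furthermore'' nesting step. The paper re-selects, for each $x\in E_\lambda$, a level-$\lambda'$ ball $B_{x,\lambda'}$ that simultaneously contains $B_{x,\lambda}$ and has radius larger than $\tfrac12 r_x(\lambda')$, and then runs the $5$-covering theorem on the family $\{B_{x,\lambda'}\}$; you instead adjoin to the level-$\lambda'$ candidate family, for each already selected $B_{i,\lambda}$, a \emph{concentric} admissible enlargement $B_i'$ of near-maximal radius among admissible balls centred at $x_{B_{i,\lambda}}$, and apply the covering theorem to the augmented family. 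Your variant is sound: property (iv) only concerns concentric dilates $\sigma B_{j,\lambda'}$, so maximality along the concentric family is exactly what is needed, and each $B_i'\supset B_{i,\lambda}$ lands in some $5B_{j,\lambda'}$ by the covering selection. It also has a genuine advantage: since medians are not monotone under enlargement of the ball, it is not automatic (and the paper does not argue) that a ball through $x$ with $m_f^{t}>\lambda'$ and radius exceeding $\tfrac12 r_x(\lambda')$ can in addition be taken to contain $B_{x,\lambda}$ --- the natural candidate need not contain it, and one cannot simply enlarge it while preserving $m_f^{t}>\lambda'$. Your concentric-maximality device sidesteps this delicate point while preserving the factor $5$. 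Both your proof and the paper's use the strengthened form of the covering theorem (every ball of the original family is contained in $5B$ for some selected $B$), which goes slightly beyond the bare statement of the $5$-covering lemma but is what its standard proof yields; it would be worth saying this explicitly, as the paper does.
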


\begin{proof}
For every $x \in E_\lambda$, denote
\[
r_x(\lambda) = \sup \{ r_B : B \in \mathcal{B}, x \in B, m_{f}^{t}(B) > \lambda \} .
\]
By the assumption, the set over which the supremum is taken is non-empty.
Moreover, Lemma~\ref{lemma2} implies that $r_x(\lambda) \leq \frac{\eta}{5} r_{B_0}$. For every $x \in E_\lambda$, we can find a ball $B_{x, \lambda} \in \mathcal{B}$ with $x \in B_{x, \lambda}$ such that 
 \[
\frac{r_x(\lambda)}{2} < r_{B_{x, \lambda}} \leq r_x(\lambda)
\quad \text{and} \quad  m_{f}^{t}(B_{x, \lambda}) > \lambda .
\] 
We then have $m_{f}^{t}(\sigma B_{x, \lambda}) \leq \lambda$ whenever $ \sigma \geq 2$ and $\sigma B_{x, \lambda} \in \mathcal{B}$.
By applying the 5-covering theorem, we obtain a countable collection of pairwise disjoint balls $\{B_i\}_i$ such that
\[
\bigcup_i B_i \subset E_\lambda \subset \bigcup_i 5B_i.
\]
Hence, so obtained balls $B_i$ are the Calder\'{o}n-Zygmund balls at level $\lambda$ and we denote them by $B_{i,\lambda}$.

We have constructed the Calder\'{o}n-Zygmund decomposition at level $\lambda$ and now focus on $\lambda'$.
Note that $E_{\lambda} \subset E_{\lambda'}$ and $r_x(\lambda) \leq r_x(\lambda')$ for every $x \in E_{\lambda}$.
Thus, for every $x \in E_{\lambda}$, we can choose a ball $B_{x, \lambda'} \in \mathcal{B}$ with $x \in B_{x, \lambda'}$ such that $B_{x, \lambda} \subset B_{x, \lambda'}$,
\[
\frac{r_x(\lambda')}{2} < r_{B_{x, \lambda'}} \leq r_x(\lambda')  \quad \text{and} \quad m_{f}^{t}(B_{x, \lambda'}) > \lambda' .
\]
Whereas, for every $x \in E_{\lambda'} \setminus E_{\lambda}$, we choose the ball $B_{x, \lambda'}$ in the similar way expect we do not have $B_{x, \lambda} \subset B_{x, \lambda'}$.

We then apply the 5-covering theorem to the balls $B_{x, \lambda'}$ to obtain the Calder\'{o}n-Zygmund balls $B_{j,\lambda'}$ at level $\lambda'$.
Moreover, the 5-covering theorem states that for every ball $B_{x, \lambda'}$ there is an enlarged ball $5 B_{j,\lambda'}$ such that 
$
B_{x, \lambda'} \subset 5 B_{j,\lambda'} .
$
Since $B_{x, \lambda} \subset B_{x, \lambda'}$ for every $x \in E_{\lambda}$, it holds that for each $B_{i, \lambda}$ there is $B_{j, \lambda'}$ such that
$
B_{i, \lambda} \subset 5 B_{j,\lambda'} .
$

\end{proof}

We move on to a good-$\lambda$ inequality which is crucial for the proof of the John--Nirenberg inequality.
In the proof of the good-$\lambda$ type inequality for the integral-type $JN_p$ in~\cite{aalto}, all Calder\'{o}n--Zygmund balls can be treated in the same way.
However, in the case of medians, we need to divide Calder\'{o}n--Zygmund balls into two collections which must be considered separately. 
This is due to the fact that medians lack the monotonicity on sets that integrals have.

\begin{lemma}
\label{lemma4}
Let $0< t \leq 1/2$, $K > 1$ and $f \in JN_{p,0,s}(\widehat{B}_0)$ for some $0<s \leq \frac{t}{2K^p c_\mu^3}$. Assume that $E_{K\lambda} \neq \emptyset$ and
\[
m_{|f|}^{t/\alpha}(\widehat{B}_0) \leq \lambda ,
\]
where $\alpha$ is given in~\eqref{alpha}.
Consider collections of Calder\'{o}n-Zygmund balls $\{B_{i,\lambda}\}_i$ and $\{B_{j,K\lambda}\}_j$ for the function $|f|$ such that each $B_{j,K\lambda}$ is contained in some $5B_{i,\lambda}$.
Then it follows that
\[
\sum_{j=1}^\infty \mu(B_{j,K\lambda}) \leq \frac{2^p c_\mu^3}{(K-1)^p}  \frac{\norm{f}_{JN_{p,0,s}(\widehat{B}_0)}^p}{\lambda^p} + \frac{1}{2K^p} \sum_{i=1}^\infty \mu(B_{i,\lambda}) .
\]

\end{lemma}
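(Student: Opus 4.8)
The plan is to work with the nonnegative function $g=\abs f$ (legitimate since $\norm{\abs f}_{JN_{p,0,s}(\widehat B_0)}\le\norm f_{JN_{p,0,s}(\widehat B_0)}$ by Lemma~\ref{lemma5}(ii)), to fix for every $j$ an index $i(j)$ with $B_{j,K\lambda}\subset 5B_{i(j),\lambda}$, and to split the collection $\{B_{j,K\lambda}\}_j$ into a \emph{regular} part
\[
\mathcal G=\bigl\{\,j:\ \mu\bigl(\{x\in B_{j,K\lambda}:g(x)\le\lambda\}\bigr)\ge s\,\mu(B_{j,K\lambda})\,\bigr\}
\]
and the remaining \emph{degenerate} part $\mathcal D$. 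The two subcollections must be handled by different mechanisms — $\mathcal G$ via the $JN_{p,0,s}$–norm and $\mathcal D$ via counting against the level–$\lambda$ balls — precisely because, unlike integral averages, medians are not monotone under set inclusion, so the smallness of $g$ on the host ball $5B_{i(j),\lambda}$ need not be inherited by the (possibly much smaller) ball $B_{j,K\lambda}$.

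For $j\in\mathcal G$ I would first prove the uniform oscillation estimate $\inf_{c\in\mathbb R} m^s_{\abs{g-c}}(B_{j,K\lambda})\ge\tfrac12(K-1)\lambda$. Indeed, property (iii) of Lemma~\ref{lemma3} at level $K\lambda$ gives $m^t_g(B_{j,K\lambda})>K\lambda$, so $\mu(\{x\in B_{j,K\lambda}:g(x)>K\lambda\})\ge t\mu(B_{j,K\lambda})\ge s\mu(B_{j,K\lambda})$ (using $s\le t$, which follows from the hypothesis), while by the definition of $\mathcal G$ the set $\{x\in B_{j,K\lambda}:g(x)\le\lambda\}$ has measure at least $s\mu(B_{j,K\lambda})$. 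Now fix $c\in\mathbb R$. If $\abs c\le\tfrac12(K+1)\lambda$, then on the first set $\abs{g-c}\ge g-\abs c>K\lambda-\tfrac12(K+1)\lambda=\tfrac12(K-1)\lambda$; if $\abs c>\tfrac12(K+1)\lambda$, then on the second set $\abs{g-c}\ge\abs c-g>\tfrac12(K+1)\lambda-\lambda=\tfrac12(K-1)\lambda$ (here $g\ge0$ is used). In both cases $\abs{g-c}>\tfrac12(K-1)\lambda$ on a subset of $B_{j,K\lambda}$ of relative measure at least $s$, so $m^s_{\abs{g-c}}(B_{j,K\lambda})\ge\tfrac12(K-1)\lambda$. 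Since the balls $B_{j,K\lambda}$ are pairwise disjoint and contained in $\widehat B_0$, inserting this into Definition~\ref{Def_medJN} (applied to $g$) and using Lemma~\ref{lemma5}(ii) gives
\[
\Bigl(\tfrac12(K-1)\lambda\Bigr)^p\sum_{j\in\mathcal G}\mu(B_{j,K\lambda})\ \le\ \norm{\abs f}_{JN_{p,0,s}(\widehat B_0)}^p\ \le\ \norm f_{JN_{p,0,s}(\widehat B_0)}^p ,
\]
so the $\mathcal G$–part is bounded by $\tfrac{2^p}{(K-1)^p}\,\lambda^{-p}\norm f_{JN_{p,0,s}(\widehat B_0)}^p$, which is absorbed in the first term of the claim.

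For $j\in\mathcal D$ the oscillation of $g$ on $B_{j,K\lambda}$ may be negligible, so the $JN_{p,0,s}$–norm gives nothing; instead one counts against the level–$\lambda$ Calder\'on--Zygmund balls. By definition of $\mathcal D$ we have $\mu(\{x\in B_{j,K\lambda}:g(x)>\lambda\})>(1-s)\mu(B_{j,K\lambda})$, while property (iv) of Lemma~\ref{lemma3} at level $\lambda$ (applicable since $r_{B_{i,\lambda}}\le\tfrac\eta5 r_{B_0}$ forces $5B_{i,\lambda}\in\mathcal B$ and $5\ge2$) gives $m^t_g(5B_{i,\lambda})\le\lambda$, hence $\mu(\{x\in 5B_{i,\lambda}:g(x)>\lambda\})\le t\mu(5B_{i,\lambda})$. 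Fixing a host $i$, summing the first inequality over the pairwise disjoint $B_{j,K\lambda}\subset 5B_{i,\lambda}$ with $i(j)=i$, using the second inequality together with the doubling bound $\mu(5B_{i,\lambda})\le c_\mu^3\mu(B_{i,\lambda})$, and finally summing over $i$, one is led to a bound $\sum_{j\in\mathcal D}\mu(B_{j,K\lambda})\le C\sum_i\mu(B_{i,\lambda})$; the quantitative assumption $s\le\tfrac t{2K^pc_\mu^3}$ (with $s\le\tfrac12$) is exactly what has to be exploited to run this counting with $C=\tfrac1{2K^p}$. Adding the $\mathcal G$– and $\mathcal D$–estimates then yields the lemma.

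I expect the $\mathcal D$–step to be the main obstacle: passing from ``$g$ large on $B_{j,K\lambda}$'' to the \emph{small} coefficient $\tfrac1{2K^p}$ — rather than the $c_\mu^3$ that a crude count produces — is where the loss of median monotonicity is genuinely felt, and it may require subdividing $\mathcal D$ further according to the relative size $\mu(B_{j,K\lambda})/\mu(5B_{i(j),\lambda})$ and applying property (iii) of Lemma~\ref{medianprops} to compare $m^{1-s}_g$ over $B_{j,K\lambda}$ with $m^{t}_g$ over $5B_{i(j),\lambda}$ (this is also what makes membership in $\mathcal G$ detectable for the non-small balls). The second delicate point is the case analysis on $c$ giving the uniform lower bound $\tfrac12(K-1)\lambda$ on $\mathcal G$.
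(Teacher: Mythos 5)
Your treatment of the collection $\mathcal G$ is correct: the two-case analysis in $c$ does give $\inf_c m^s_{|g-c|}(B_{j,K\lambda})\ge\tfrac12(K-1)\lambda$ for those balls, and since they are pairwise disjoint and contained in $\widehat B_0$ this part is even better than what the first term of the lemma requires. The genuine gap is the collection $\mathcal D$, and it is not a technical leftover but the entire content of the lemma: the coefficient $\tfrac1{2K^p}$ in front of $\sum_i\mu(B_{i,\lambda})$ is exactly what makes the geometric iteration in Theorem~\ref{thm1} converge, so it cannot be replaced by anything of size comparable to $1$. Your counting argument, summing $\mu(\{g>\lambda\}\cap B_{j,K\lambda})>(1-s)\mu(B_{j,K\lambda})$ over the disjoint balls hosted by $5B_{i,\lambda}$ and using $m^t_g(5B_{i,\lambda})\le\lambda$, only yields $\sum_{j}\mu(B_{j,K\lambda})\le \tfrac{t\,c_\mu^3}{1-s}\mu(B_{i,\lambda})$; with $t=\tfrac12$ (the value used in Theorem~\ref{thm1}) this constant exceeds $1$ and the good-$\lambda$ scheme collapses. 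The patch you suggest, comparing $m^{1-s}_g(B_{j,K\lambda})$ with $m^{t}_g(5B_{i(j),\lambda})$ via Lemma~\ref{medianprops}(iii), cannot be run ball by ball, because property (iii) requires a bound $\mu(5B_{i,\lambda})\le c\,\mu(B_{j,K\lambda})$ and the ratio of these measures is completely uncontrolled.

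What actually closes this step in the paper is a dichotomy over the \emph{host} indices $i$, not over the level-$K\lambda$ balls: with $J_i$ the set of $j$'s hosted by $5B_{i,\lambda}$, one splits according to whether $\mu(B_{i,\lambda})\le 2K^p\mu\big(\bigcup_{j\in J_i}B_{j,K\lambda}\big)$. If not, the bound $\sum_{j\in J_i}\mu(B_{j,K\lambda})\le\tfrac1{2K^p}\mu(B_{i,\lambda})$ is immediate for that $i$. If yes, one first shows (using CZ properties (iii)--(iv) and Lemma~\ref{medianprops}(ii),(v),(vii), with the constant $m^t_f(5B_{i,\lambda})$ rather than a general $c$) that $m^{t}_{|f-m^t_f(5B_{i,\lambda})|}(B_{j,K\lambda})\ge(K-1)\lambda$ for \emph{every} $j\in J_i$, passes to the union by Lemma~\ref{medianprops}(x), and then transfers the oscillation up to $5B_{i,\lambda}$ by Lemma~\ref{medianprops}(iii) at index $t/\beta$ with $\beta=2K^pc_\mu^3$ — this is precisely where the hypothesis $s\le t/(2K^pc_\mu^3)$ enters, and it is legitimate because now the measure ratio is controlled by the dichotomy itself. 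Finally, since the balls $5B_{i,\lambda}$ overlap, they cannot be inserted into the $JN_{p,0,s}$ norm directly; the paper introduces the sharp maximal function $M^\#_{\mathcal B}$ and a 5-covering argument to extract disjoint balls, paying the extra $c_\mu^3$ and the factor $2^p$ from Lemma~\ref{lemmamedian1}. None of these steps appears in your proposal, so the $\mathcal D$-estimate with constant $\tfrac1{2K^p}$ remains unproved; once you add the $I$/$I^c$ dichotomy and the covering argument, your $\mathcal G$/$\mathcal D$ split becomes superfluous, since the paper's oscillation bound holds for all $j$.
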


\begin{proof}
Denote
\[
J_i = \left\{ j \in \mathbb{N}: B_{j,K\lambda} \subset 5B_{i,\lambda}, j \notin \bigcup_{k=1}^{i-1} J_k \right\}
\]
for every $i \in \mathbb{N}$, and
\[
I = \big\{ i\in \mathbb{N}: \mu(B_{i,\lambda}) \leq 2 K^{p} \mu\big( \bigcup_{j \in J_i} B_{j,K \lambda}\big)  \big\} .
\]
Particularly, the set $J_i$ contains those indexes $j \notin \bigcup_{k=1}^{i-1} J_k$ for which $B_{j,K\lambda}$ is contained in $5B_{i,\lambda}$.
Since every $B_{j,K\lambda}$ is contained in some $5B_{i,\lambda}$, we get the partition
\[
\bigcup_{j=1}^\infty B_{j,K\lambda} = \bigcup_{i=1}^\infty \bigcup_{j \in J_i} B_{j,K\lambda} .
\]
Using properties (ii), (v), (vii) of Lemma~\ref{medianprops} and (iii), (iv) of Lemma~\ref{lemma3} in this order, we obtain
\begin{align*}
m_{|f-m_{f}^{t}(5B_{i,\lambda})| }^{t}(B_{j,K\lambda}) &\geq m_{|f| - |m_{f}^{t}(5B_{i,\lambda})| }^{t}(B_{j,K\lambda}) \\
&= m_{|f| }^{t}(B_{j,K\lambda}) - |m_{f}^{t}(5B_{i,\lambda})| \\
&\geq m_{|f| }^{t}(B_{j,K\lambda}) - m_{|f|}^{t}(5B_{i,\lambda}) \\
&\geq K\lambda - \lambda = (K-1)\lambda .
\end{align*}
Since $B_{j,K\lambda}$ are pairwise disjoint, property (x) of Lemma~\ref{medianprops} implies that
\[
m_{|f-m_{f}^{t}(5B_{i,\lambda})| }^{t}\big(\bigcup_{j \in J_i} B_{j,K\lambda}\big) \geq (K-1)\lambda
\]
for every $i \in \mathbb{N}$.
For $i \in I$, it holds that
\[
\mu(5B_{i,\lambda}) \leq c_\mu^3 \mu(B_{i,\lambda}) \leq 2 K^p c_\mu^3 \mu\big( \bigcup_{j \in J_i} B_{j,K\lambda}\big) .
\]
Hence, by property (iii) of Lemma~\ref{medianprops}, we have
\[
(K-1)\lambda \leq m_{|f-m_{f}^{t}(5B_{i,\lambda})| }^{t}\big(\bigcup_{j \in J_i} B_{j,K\lambda}\big) \leq m_{|f-m_{f}^{t}(5B_{i,\lambda})| }^{t/\beta}(5B_{i,\lambda})
\]
for every $i \in I$, where $\beta = 2 K^p c_\mu^3$.

Denote
\[
M_\mathcal{B}^\# f(x) = \sup_{\substack{B \ni x \\ B \in \mathcal{B}}} m_{|f-m_{f}^{t}(B)| }^{t/\beta}(B) .
\]
Then for $i \in I$, we have $M_\mathcal{B}^\# f(x) \geq m_{|f-m_{f}^{t}(5B_{i,\lambda})| }^{t/\beta}(5B_{i,\lambda}) > (K-1)\lambda$ for every $x \in 5B_{i,\lambda}$.
Thus, we get
\begin{align*}
\mu\Big(\bigcup_{i\in I} \bigcup_{j \in J_i} B_{j,K\lambda}\Big) \leq \mu\Big(\bigcup_{i\in I} 5B_{i,\lambda}\Big) \leq \mu(\{ x \in \widehat{B}_0 : M_\mathcal{B}^\# f(x) > (K-1)\lambda \}) .
\end{align*}
For every $x \in \{ x \in \widehat{B}_0 : M_\mathcal{B}^\# f(x) > (K-1)\lambda \}$, there exists $B_x \in \mathcal{B}$ such that $x \in B_x$ and $m_{|f-m_{f}^{t}(B_x)| }^{t/\beta}(B_x) > (K-1)\lambda$.
Applying the 5-covering theorem, we get a countable collection of pairwise disjoint balls $B_k$ such that
\[
\{ x \in \widehat{B}_0 : M_\mathcal{B}^\# f(x) > (K-1)\lambda \} \subset \bigcup_{k=1}^\infty 5 B_k .
\]
We then have
\begin{align*}
\mu(\{ x \in \widehat{B}_0 : M_\mathcal{B}^\# f(x) > (&K-1)\lambda \}) \leq \sum_{k=1}^\infty \mu(5 B_k) \leq c_\mu^3 \sum_{k=1}^\infty \mu( B_k) \\
&\leq c_\mu^3 \frac{1}{(K-1)^p \lambda^p} \sum_{k=1}^\infty \mu( B_k) \lp m_{|f-m_{f}^{t}(B_k)| }^{t/\beta}(B_k) \rp^p \\
&\leq \frac{2^p c_\mu^3}{(K-1)^p}  \frac{\norm{f}_{JN_{p,0,s}(\widehat{B}_0)}^p}{\lambda^p} ,
\end{align*}
whenever $s \leq \frac{t}{\beta} = \frac{t}{2K^p c_\mu^3}$.

On the other hand, for $i \notin I$ we have
\[
\sum_{j \in J_i} \mu(B_{j,K\lambda}) \leq \frac{1}{2K^p} \mu(B_{i,\lambda}),
\]
and thus summing over $i \notin I$ we get
\[
\sum_{i\notin I} \sum_{j \in J_i} \mu(B_{j,K\lambda}) \leq \frac{1}{2K^p} \sum_{i\notin I} \mu(B_{i,\lambda}) .
\]
By combining the cases $i\in I$ and $i \notin I$, we conclude that
\begin{align*}
\sum_{j=1}^\infty \mu(B_{j,K\lambda}) &= \sum_{i\in I} \sum_{j \in J_i} \mu(B_{j,K\lambda}) + \sum_{i\notin I} \sum_{j \in J_i} \mu(B_{j,K\lambda}) \\
&\leq \frac{2^p c_\mu^3}{(K-1)^p}  \frac{\norm{f}_{JN_{p,0,s}(\widehat{B}_0)}^p}{\lambda^p} + \frac{1}{2K^p} \sum_{i=1}^\infty \mu(B_{i,\lambda}) .
\end{align*}

\end{proof}

We now state our main result which is the John--Nirenberg inequality for $JN_{p,0,s}$. It implies that $JN_{p,0,s}(\widehat{B})$ is contained in $L^{p,\infty}(B)$ for all balls $B \subset X$.

\begin{theorem}
\label{thm1}
Let $0 < s \leq s_0 = \min\{\tfrac{1}{2\alpha}, \tfrac{1}{8 c_\mu^3}\} $ and $s \leq r \leq \frac{1}{2}$, where $\alpha$ is given in~\eqref{alpha}.
If $f \in JN_{p,0,s}(\widehat{B}_0)$, then for every $\lambda > 0$ it holds that
\[
\mu(\{ x \in B_0: |f(x)-m_f^{r}(B_0)| > \lambda \}) \leq c \frac{\norm{f}_{JN_{p,0,s}(\widehat{B}_0)}^p}{\lambda^p} ,
\]
where the constant $c$ depends on $p$ and the doubling constant $c_\mu$, that is,
\[
c = \frac{2^{p+3} c_\mu^6}{(2^\frac{1}{p} -1)^p} .
\]
\end{theorem}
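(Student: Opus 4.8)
The plan is to run a bootstrap/iteration argument on the distribution function, exploiting the Calderón--Zygmund decomposition (Lemma~\ref{lemma3}) and the good-$\lambda$ inequality (Lemma~\ref{lemma4}). First I would reduce to the case $f \geq 0$ and $r = t$: by properties (v) and (vii) of Lemma~\ref{medianprops}, $|f - m_f^r(B_0)| = |f - m_f^r(B_0)|$ has median $0$-ish behaviour, and replacing $f$ by $|f - m_f^r(B_0)|$ only doubles the norm (cf.\ Lemma~\ref{lemmamedian1}), so it suffices to bound $\mu(\{x \in B_0 : f(x) > \lambda\})$ for nonnegative $f$ with $m_f^{t/\alpha}(\widehat B_0)$ small. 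Since $|f| \leq M_{\mathcal B} f$ almost everywhere in $B_0$, it is enough to control $\mu(E_\lambda) = \mu(\{x \in \widehat B_0 : M_{\mathcal B} f(x) > \lambda\})$. Fix $K = 2^{1/p}$ so that $K^p = 2$, and choose $s_0$ exactly so that the hypothesis $s \leq \frac{t}{2K^p c_\mu^3} = \frac{t}{4 c_\mu^3}$ of Lemma~\ref{lemma4} is satisfied for all admissible $t$ (here $t = r$, and $s \leq \min\{\frac{1}{2\alpha}, \frac{1}{8 c_\mu^3}\}$ with $r \geq s$ gives room, since $r \le 1/2$ forces $t/(4c_\mu^3) \ge s/(4c_\mu^3)$... one must check the constants line up, which is the role of the explicit $s_0$).

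Next I would set $\lambda_0$ to be a fixed base level, say any $\lambda_0 \geq m_f^{t/\alpha}(\widehat B_0)$, and define $a_k = \mu(E_{K^k \lambda_0})$ for $k \geq 0$. Applying Lemma~\ref{lemma4} with the pair of levels $(K^{k}\lambda_0, K^{k+1}\lambda_0)$ — noting that the Calderón--Zygmund balls $\{B_{i, K^k\lambda_0}\}$ cover $E_{K^k\lambda_0}$ up to dilation by $5$, and by the last assertion of Lemma~\ref{lemma3} each $B_{j, K^{k+1}\lambda_0}$ sits inside some $5B_{i,K^k\lambda_0}$ — yields
\[
\sum_j \mu(B_{j, K^{k+1}\lambda_0}) \leq \frac{2^p c_\mu^3}{(K-1)^p} \frac{\|f\|^p}{(K^k\lambda_0)^p} + \frac{1}{2K^p}\sum_i \mu(B_{i, K^k\lambda_0}).
\]
Using $\mu(E_{K^{k+1}\lambda_0}) \leq \sum_i \mu(5 B_{i, K^{k+1}\lambda_0}) \leq c_\mu^3 \sum_i \mu(B_{i, K^{k+1}\lambda_0})$ on the left, and $\sum_i \mu(B_{i,K^k\lambda_0}) \leq \mu(E_{K^k\lambda_0}) \cdot(\text{something})$ — actually $\bigcup_i B_{i,K^k\lambda_0} \subset E_{K^k\lambda_0}$ gives $\sum_i \mu(B_{i,K^k\lambda_0}) \le \mu(E_{K^k\lambda_0}) = a_k$ by disjointness — this becomes a recursion of the form
\[
a_{k+1} \leq \frac{c_\mu^3}{2K^p} a_k + \frac{C \|f\|^p}{(K^k\lambda_0)^p}, \qquad C = \frac{2^p c_\mu^6}{(K-1)^p}.
\]
Since $K^p = 2$ and $c_\mu^3/(2K^p) = c_\mu^3/4 < 1$ precisely when... hmm, this needs $c_\mu^3 < 4$, which is not generally true — so in fact one must be more careful and absorb the $c_\mu^3$ differently, perhaps by noting that the good-$\lambda$ inequality's absorption term $\frac{1}{2K^p}\sum_i \mu(B_{i,\lambda})$ already accounts for the dilation loss if one tracks $\sum_i \mu(5B_{i,\lambda})$ throughout rather than $\mu(E_\lambda)$. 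I would instead define $b_k = \sum_i \mu(5 B_{i, K^k\lambda_0})$ and rephrase Lemma~\ref{lemma4}'s conclusion in terms of $b_k$; then the recursion reads $b_{k+1} \leq \theta b_k + C\|f\|^p (K^k\lambda_0)^{-p}$ with a genuine contraction factor $\theta < 1$ coming from the $\frac{1}{2K^p}$, and $b_0$ is controlled by the CZ decomposition at the base level together with the trivial bound $b_0 \le c_\mu^3\mu(\widehat B_0)$ or, better, by a direct estimate $\lambda_0^p b_0 \lesssim \|f\|^p$ obtained by summing property (iii) of Lemma~\ref{lemma3}.

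Iterating the linear recursion gives a geometric series: $b_k \leq \theta^k b_0 + C\|f\|^p\lambda_0^{-p}\sum_{j=0}^{k-1}\theta^{k-1-j} K^{-jp}$, and since $K^{-p} = 1/2$ while $\theta < 1$, both terms decay and the sum converges, yielding $b_k \leq C' \|f\|^p (K^k\lambda_0)^{-p}$ for a constant $C'$ of the claimed shape $\frac{2^{p+3}c_\mu^6}{(2^{1/p}-1)^p}$ after bookkeeping. Then for arbitrary $\lambda > 0$, picking $k$ with $K^k\lambda_0 \leq \lambda < K^{k+1}\lambda_0$ gives $\mu(E_\lambda) \leq \mu(E_{K^k\lambda_0}) \leq b_k \leq C'\|f\|^p(K^k\lambda_0)^{-p} \leq C' K^p \|f\|^p \lambda^{-p}$, and finally $\mu(\{x\in B_0 : |f - m_f^r(B_0)| > \lambda\}) \leq \mu(E_\lambda^{|f - m_f^r(B_0)|}) \lesssim \|f - m_f^r(B_0)\|_{JN_{p,0,s}}^p \lambda^{-p} \le 2^p\|f\|^p\lambda^{-p}$, absorbing the factor $2^p$ into the constant. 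For small $\lambda$ (below the base level) the bound is trivial once one takes $\lambda_0$ proportional to $\|f\| / \mu(B_0)^{1/p}$ or handles it by the estimate $\mu(\{|f - m_f^r(B_0)| > \lambda\}) \le \mu(B_0) \le \|f\|^p/\lambda^p$ directly.

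\textbf{Main obstacle.} The delicate point is the constant chase: making the contraction factor in the recursion genuinely less than one \emph{after} accounting for the doubling loss $c_\mu^3$ incurred each time one passes between $E_\lambda$ and the union of CZ balls, and then verifying that the accumulated geometric constants collapse to exactly $\frac{2^{p+3}c_\mu^6}{(2^{1/p}-1)^p}$. This is why the theorem fixes $K = 2^{1/p}$ (so $K^p = 2$, decoupling the $\lambda$-decay from $p$) and pins down $s_0 = \min\{\frac{1}{2\alpha}, \frac{1}{8c_\mu^3}\}$; tracking these through Lemmas~\ref{lemma3} and~\ref{lemma4} — especially ensuring the hypothesis $s \le \frac{t}{2K^pc_\mu^3}$ of Lemma~\ref{lemma4} holds with $t = r \in [s, 1/2]$ — is the real work, whereas the iteration itself is standard.
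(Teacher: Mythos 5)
Your skeleton is the same as the paper's (Calder\'{o}n--Zygmund balls for $|f-m_f^{r}(B_0)|$ at levels $K^k\lambda_0$ with $K=2^{1/p}$, the good-$\lambda$ inequality to pass between consecutive levels, a trivial estimate below the base level), but two of your concrete choices break down. First, setting $t=r$ is not admissible: Lemma~\ref{lemma4} requires $s\leq \frac{t}{2K^pc_\mu^3}=\frac{t}{4c_\mu^3}$, and with $t=r$ this fails whenever $r$ is close to $s$ (for $r=s$ it would force $4c_\mu^3\leq 1$). The point you defer as ``the real work'' is resolved by decoupling the two parameters: the paper fixes $t=\tfrac{1}{2}$ throughout, and $r$ enters only through the centering constant $m_f^{r}(B_0)$; the resulting quantities are still comparable to $\norm{f}_{JN_{p,0,s}}$ via Lemma~\ref{lemmamedian1} precisely because $s\leq r\leq\tfrac12$, and the two constraints $s\leq\frac{t}{\alpha}=\frac{1}{2\alpha}$ and $s\leq\frac{t}{4c_\mu^3}=\frac{1}{8c_\mu^3}$ are exactly the definition of $s_0$.

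Second, your iteration does not close as written. You rightly note that the recursion for $a_k=\mu(E_{K^k\lambda_0})$ carries the factor $c_\mu^3/(2K^p)=c_\mu^3/4$, which need not be below $1$; but your proposed fix, iterating $b_k=\sum_i\mu(5B_{i,K^k\lambda_0})$, has the same defect, because to invoke Lemma~\ref{lemma4} you must first pay $\mu(5B_{j,K^{k+1}\lambda_0})\leq c_\mu^3\mu(B_{j,K^{k+1}\lambda_0})$, so the contraction factor is again $c_\mu^3/(2K^p)$, not $\frac{1}{2K^p}$ (re-proving the good-$\lambda$ lemma with dilated balls would change $s_0$ and the constant, so it is not a faithful fix). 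The quantity to iterate is the \emph{un-dilated} sum $S_k=\sum_i\mu(B_{i,K^k\lambda_0})$: Lemma~\ref{lemma4} relates $S_{k+1}$ to $S_k$ directly with absorption constant $\frac{1}{2K^p}=\frac14<1$, and the single factor $c_\mu^3$ is paid only once, at the very end, through $\mu(\{x\in B_0:|f-m_f^{r}(B_0)|>\lambda\})\leq\sum_j\mu(5B_{j,K^N\lambda_0})\leq c_\mu^3 S_N$; the paper runs this as an induction $S_n\leq c_1\norm{f}_{JN_{p,0,s}}^p(K^n\lambda_0)^{-p}$ with $c_1=2^{p+1}c_\mu^3K^p/(K-1)^p$, which produces exactly the stated constant. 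Your base case is also flawed: ``summing property (iii) of Lemma~\ref{lemma3}'' bounds $\lambda_0^pS_0$ by $\sum_i\mu(B_i)\lp m^{t}_{|f-m_f^{r}(B_0)|}(B_i)\rp^p$, whose centering constant is not adapted to the individual balls $B_i$, so it is not controlled by $\norm{f}_{JN_{p,0,s}}$. The paper instead takes $\lambda_0=m^{t/\alpha}_{|f-m_f^{r}(B_0)|}(\widehat{B}_0)$, bounds $S_0\leq\mu(\widehat{B}_0)$ trivially, and applies Lemma~\ref{lemmamedian1} on the single ball; this choice of $\lambda_0$ simultaneously settles the regime $\lambda\leq\lambda_0$, whereas your choice $\lambda_0\sim\norm{f}_{JN_{p,0,s}}/\mu(B_0)^{1/p}$ would not guarantee the hypothesis $m^{t/\alpha}_{|f-m_f^{r}(B_0)|}(\widehat{B}_0)\leq\lambda$ needed to start Lemmas~\ref{lemma3} and~\ref{lemma4}.
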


\begin{proof}
Let $t = \frac{1}{2}$.
We can assume that $E_\lambda = E_\lambda^{|f-m_f^{r}(B_0)|} \neq \emptyset$, since otherwise the claim is clear. 
In addition, we can assume that $\lambda > \lambda_0 = m_{|f-m_f^{r}(B_0)|}^{t/\alpha}(\widehat{B}_0)$, since otherwise if $0 < \lambda \leq \lambda_0$, then by the trivial estimate and Lemma~\ref{lemmamedian1} we have
\begin{align*}
\mu(\{ x \in B_0: |f(x)-m_f^{r}(B_0)| > \lambda \}) &\leq \mu(\widehat{B}_0) \frac{\lp m_{|f-m_f^{r}(B_0)|}^{t/\alpha}(\widehat{B}_0) \rp^p}{\lambda^p} \\
&\leq 2^p \frac{\norm{f}_{JN_{p,0,s}}^p}{\lambda^p} ,
\end{align*}
whenever $s \leq \frac{t}{\alpha} = \frac{1}{2\alpha}$ and $s \leq r \leq \frac{1}{2}$.
Thus, the conditions in Lemma~\ref{lemma3} and Lemma~\ref{lemma4} hold for the function $|f-m_f^{r}(B_0)|$.

Let $K = 2^{1/p}$ and $N \in \mathbb{N}$ such that
\[
K^N \lambda_0 < \lambda \leq K^{N+1} \lambda_0 .
\]
Consider $N+1$ families of Calder\'{o}n-Zygmund balls at levels $\lambda_0, K \lambda_0, \dots, K^N \lambda_0$.
Note that for every $n=0,1,\dots,N-1,$ each $B_{i,K^{n+1}\lambda}$ is contained in some $5B_{j,K^n\lambda}$.
It follows that
\begin{align*}
\mu(\{ x \in B_0: |f(x)-m_f^{r}(B_0)| > \lambda \}) &\leq \mu(\{ x \in B_0: |f(x)-m_f^{r}(B_0)| > K^N \lambda_0 \}) \\ &\leq \sum_{j=1}^\infty \mu(5B_{j,K^N \lambda_0}) \leq c_\mu^3 \sum_{j=1}^\infty \mu(B_{j,K^N \lambda_0}) .
\end{align*}
We claim that
\[
\sum_{i=1}^\infty \mu(B_{i,K^n \lambda_0}) \leq c_1  \frac{\norm{f}_{JN_{p,0,s}}^p}{(K^n \lambda_0)^p}
\]
for every $n=0,1,\dots,N$, where 
\[
c_1 = \frac{2^{p+1} c_\mu^3 K^p}{(K-1)^p} .
\]
We prove the claim by induction. First, note that the claim holds for $n=0$  since
\[
\sum_{i=1}^\infty \mu(B_{i, \lambda_0}) \leq \mu(\widehat{B}_0) = \mu(\widehat{B}_0) \frac{ \lp m_{|f-m_f^{r}(B_0)|}^{t/\alpha}(\widehat{B}_0) \rp^p}{\lambda_0^p} \leq 2^p \frac{ \norm{f}_{JN_{p,0,s}}^p}{\lambda_0^p}.
\]
Assume then that the claim holds for $k \in \{0,1,\dots,N-1\}$, that is, 
\[
\sum_{i=1}^\infty \mu(B_{i,K^k\lambda_0}) \leq c_1 \frac{\norm{f}_{JN_{p,0,s}}^p}{(K^{k} \lambda_0)^p} .
\]
We show that this implies the claim for $k+1$.
By using Lemma~\ref{lemma4} for $K^k \lambda_0$, we observe that
\begin{align*}
\sum_{j=1}^\infty \mu(B_{j,K^{k+1}\lambda_0}) &\leq \frac{2^p c_\mu^3}{(K-1)^p}  \frac{\norm{f}_{JN_{p,0,s}}^p}{(K^k\lambda_0)^p} + \frac{1}{2K^p} \sum_{i=1}^\infty \mu(B_{i,K^k \lambda_0}) \\
&\leq \frac{2^p c_\mu^3}{(K-1)^p} \frac{\norm{f}_{JN_{p,0,s}}^p}{(K^k\lambda_0)^p} + \frac{c_1}{2K^p} \frac{\norm{f}_{JN_{p,0,s}}^p}{(K^{k} \lambda_0)^p} \\
&= \bigg( \frac{2^p c_\mu^3 K^p}{(K-1)^p} + \frac{c_1}{2} \bigg) \frac{\norm{f}_{JN_{p,0,s}}^p}{(K^{k+1} \lambda_0)^p} \\
&= c_1 \frac{\norm{f}_{JN_{p,0,s}}^p}{(K^{k+1} \lambda_0)^p} .
\end{align*}
Therefore, the claim holds for $k+1$.

Hence, we conclude that
\begin{align*}
\mu(\{ x \in B_0: |f(x)-m_f^{r}(B_0)| > \lambda \}) &\leq c_\mu^3 c_1  \frac{\norm{f}_{JN_{p,0,s}}^p}{(K^N \lambda_0)^p} \\
&= c_\mu^3 c_1  K^p \frac{\norm{f}_{JN_{p,0,s}}^p}{(K^{N+1}\lambda_0)^p} \\ 
&\leq c \frac{\norm{f}_{JN_{p,0,s}}^p}{\lambda^p} ,
\end{align*}
where
\[
c = c_\mu^3 c_1  K^p = \frac{2^{p+1} c_\mu^6 K^{2p}}{(K-1)^p} = \frac{2^{p+3} c_\mu^6}{(2^\frac{1}{p} -1)^p} ,
\]
since $K = 2^{1/p}$. \qedhere

\end{proof}

\section{Global John--Nirenberg inequality for $JN_{p,0,s}$ in Boman sets}

We give a proof for the global John--Nirenberg inequality for $JN_{p,0,s}$ in Boman sets.
For more detailed discussion about Boman sets, see~\cite{marolasaari} and references therein.

\begin{definition}
\label{boman_def}
A set $\Delta \subset X$ is called Boman if there are constants $C_2 > C_1 > 1$, $C_3 > 1$, $\rho >1$ and $M \in \mathbb{N}$ and a collection of pairwise disjoint balls $\mathcal{F}$ such that
\begin{enumerate}[(i),topsep=5pt,itemsep=5pt]

\item $\Delta = \bigcup_{B \in \mathcal{F}} C_1 B = \bigcup_{B \in \mathcal{F}} C_2 B$.

\item If $B \in \mathcal{F}$, there are at most $M$ balls $V \in \mathcal{F}$ with $C_2 V \cap C_2 B \neq \emptyset$.

\item There is a central ball $B_\ast \in \mathcal{F}$ such that for each $B \in \mathcal{F}$ there exists a finite collection of balls $\mathcal{C}(B) = \{B_i\}_{i=1}^{k_B} \subset \mathcal{F}$ with $B_1 = B_\ast$ and $B_{k_B} = B$.

\item In $\mathcal{C}(B)$, for each pair of balls $B_i$ and $B_{i-1}$ corresponding to consecutive indices there exists a ball $D_i \subset C_1 B_i \cap C_1 B_{i-1}$ such that $\mu(D_i) \geq C_3 (\mu(B_i) + \mu(B_{i-1}) )$.

\item If $V \in \mathcal{C}(B)$, then $B \subset \rho V$.

\end{enumerate}

\end{definition}

Parameters $C_1,C_2,C_3,\rho$ and $M$ in the results below are the same as in Definition~\ref{boman_def}.
The proof of the following lemma can be found in~\cite{marolasaari} for integral averages. The proof is identical for medians, and thus is omitted here.

\begin{lemma}
\label{bomanlemma}
Let $\Delta \subset X$ be a Boman set, $1 < p < \infty$ and $0 < s \leq 1/2$. Then
\[
\sum_{B \in \mathcal{F}} \norm{ m_f^{s}(C_1 B) - m_f^{s}(C_1 B_\ast) }_{L^{p,\infty}(C_1 B)}^p \leq C_0 \sum_{V \in \mathcal{F}} \norm{ f - m_f^{s}(C_1 V) }_{L^{p,\infty}(C_1 V)}^p ,
\]
where the constant $C_0$ depends on $p$, the doubling constant $c_\mu$, $C_1,C_2,C_3,\rho$ and $M$.

\end{lemma}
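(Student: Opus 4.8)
The plan is to mimic the chaining argument for integral averages in~\cite{marolasaari}, isolating the one place where medians must be handled differently. Write $a_V = m_f^{s}(C_1 V)$ and $N_V = \norm{f - m_f^{s}(C_1 V)}_{L^{p,\infty}(C_1 V)}$ for $V \in \mathcal{F}$. Since $a_B - a_{B_\ast}$ is constant on $C_1 B$, the left-hand side equals $\sum_{B \in \mathcal{F}} |a_B - a_{B_\ast}|^{p}\, \mu(C_1 B)$, so the task is to bound $|a_B - a_{B_\ast}|$ by a chained sum of the $N_V$. For each $B \in \mathcal{F}$ I would telescope along its chain $\mathcal{C}(B) = \{B_i\}_{i=1}^{k_B}$ from Definition~\ref{boman_def}, with $B_1 = B_\ast$ and $B_{k_B} = B$, to get
\[
|a_B - a_{B_\ast}| \leq \sum_{i=2}^{k_B} |a_{B_i} - a_{B_{i-1}}| .
\]

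The median-specific step is the estimate of a single consecutive increment. By property~(iv) of Definition~\ref{boman_def} there is a ball $D_i \subset C_1 B_i \cap C_1 B_{i-1}$ with $\mu(D_i) \geq C_3(\mu(B_i) + \mu(B_{i-1}))$; since $D_i \subset C_1 B_i$, the doubling property makes $\mu(D_i)$, $\mu(C_1 B_i)$ and $\mu(C_1 B_{i-1})$ mutually comparable with constants depending only on $c_\mu$, $C_1$, $C_3$. I claim that
\[
|a_{B_i} - a_{B_{i-1}}|^{p} \leq \frac{2^{p}}{\mu(D_i)}\bigl( N_{B_i}^{p} + N_{B_{i-1}}^{p} \bigr).
\]
To see this, suppose, after relabelling the two balls if necessary, that $a_{B_{i-1}} > a_{B_i} + 2\tau$ for some $\tau > 0$. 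Then on $D_i$ the set where $|f - a_{B_{i-1}}| < \tau$ is contained in $\{ x \in C_1 B_i : f(x) - a_{B_i} > \tau \}$; by the very definition of the weak-$L^{p}$ quasinorm the former has measure at least $\mu(D_i) - (N_{B_{i-1}}/\tau)^{p}$ and the latter at most $(N_{B_i}/\tau)^{p}$, so $\mu(D_i)\,\tau^{p} \leq N_{B_i}^{p} + N_{B_{i-1}}^{p}$; letting $\tau$ increase to $\tfrac12|a_{B_i} - a_{B_{i-1}}|$ gives the claim. This is the analogue for weak norms of the Chebyshev estimate used for averages in~\cite{marolasaari}, and it is essentially the only point at which the passage to medians has to be checked; in particular it uses nothing beyond measurability of $f$.

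Substituting this into the telescoping sum, using the comparability of $\mu(D_i)$, $\mu(C_1 B_i)$, $\mu(C_1 B_{i-1})$ and the elementary inequality $(x^{p}+y^{p})^{1/p} \leq x + y$, and noting that each $V \in \mathcal{C}(B)$ appears in at most two consecutive pairs, I obtain
\[
|a_B - a_{B_\ast}| \leq c \sum_{V \in \mathcal{C}(B)} \frac{N_V}{\mu(C_1 V)^{1/p}},
\]
and hence
\[
\sum_{B \in \mathcal{F}} \mu(C_1 B)\, |a_B - a_{B_\ast}|^{p} \leq c \sum_{B \in \mathcal{F}} \mu(C_1 B)\Bigl( \sum_{V \in \mathcal{C}(B)} \frac{N_V}{\mu(C_1 V)^{1/p}} \Bigr)^{p}.
\]
It then remains to prove the purely combinatorial chaining inequality
\[
\sum_{B \in \mathcal{F}} \mu(C_1 B)\Bigl( \sum_{V \in \mathcal{C}(B)} \frac{N_V}{\mu(C_1 V)^{1/p}} \Bigr)^{p} \leq C_0 \sum_{V \in \mathcal{F}} N_V^{p},
\]
which contains no medians whatsoever and is exactly the estimate established for integral averages in~\cite{marolasaari}; I would invoke it. Its mechanism is the nesting property~(v) of Definition~\ref{boman_def} — if $V \in \mathcal{C}(B)$ then $B \subset \rho V$, so, the balls of $\mathcal{F}$ being pairwise disjoint, $\sum_{B :\, V \in \mathcal{C}(B)} \mu(C_1 B)$ is controlled by $\mu(\rho V)$ and hence by $\mu(C_1 V)$ — together with the bounded-overlap property~(ii).

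I expect this combinatorial inequality to be the main obstacle. A direct Hölder or duality argument breaks down because the chains $\mathcal{C}(B)$ can be arbitrarily long, so one cannot afford a factor depending on $k_B$; one must instead split each chain according to the ratio $\mu(V)/\mu(C_1 B)$ (which property~(v) keeps bounded below), estimate each dyadic block, and sum the resulting geometric series, using~(ii) to control the overlap of the "shadows" $\{B : V \in \mathcal{C}(B)\}$ scale by scale. All the other ingredients — the telescoping, the doubling comparisons, and the weak-type increment bound above — are routine, and the only properties of the maximal $s$-median actually used are those implicit in $a_V$ being a well-defined finite real number.
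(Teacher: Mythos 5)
Your proposal is correct and follows essentially the route the paper intends: the paper omits the proof of Lemma~\ref{bomanlemma} entirely, citing \cite{marolasaari} and asserting that the integral-average argument carries over to medians, and your sketch is exactly that chaining argument, with the only median-sensitive step — the weak-norm bound for a single consecutive increment via the overlap ball $D_i$ from Definition~\ref{boman_def}~(iv) — verified correctly (indeed your pigeonhole argument works for arbitrary constants, which is precisely why the adaptation is painless). The remaining combinatorial estimate you defer to \cite{marolasaari} is median-free, so invoking it matches the paper's own level of rigor (your aside that a duality argument ``breaks down'' for long chains is off the mark, since the cited proof handles them by duality together with the maximal function, but this does not affect correctness).
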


For the next theorem, the global John--Nirenberg lemma, we fix the parameter~$\eta$ from Section 4 such that $1 + \eta = \frac{C_2}{C_1}$. The proof follows that of~\cite{marolasaari}.

\begin{theorem}
\label{JN-lemma_Boman}
Let $\Delta \subset X$ be a Boman set and $0 < s \leq s_0$, where $s_0$ is given in Theorem~\ref{thm1}.
If $f \in JN_{p,0,s}(\Delta)$, then there exists $a \in \mathbb{R}$ such that for every $\lambda > 0$ it holds that
\[
\mu(\{ x \in \Delta: |f(x)-a| > \lambda \}) \leq C \frac{\norm{f}_{JN_{p,0,s}(\Delta)}^p}{\lambda^p} ,
\]
where $C$ depends on $p$, the doubling constant $c_\mu$, $C_1,C_2,C_3,\rho$ and $M$.

\end{theorem}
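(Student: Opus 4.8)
The plan is to combine the local John--Nirenberg inequality (Theorem~\ref{thm1}) applied on each ball $C_1 B$ of the Boman covering $\mathcal{F}$ with the chaining estimate of Lemma~\ref{bomanlemma}, which controls how much the local medians $m_f^s(C_1 B)$ can drift away from the median over the central ball $m_f^s(C_1 B_\ast)$. The natural candidate for the constant is $a = m_f^s(C_1 B_\ast)$. The key point is that, by property (i) of the Boman definition, $\Delta = \bigcup_{B \in \mathcal{F}} C_1 B$, so it suffices to control $\mu(\{x \in C_1 B : |f(x) - a| > \lambda\})$ for each $B$ and then sum, paying attention that the bounded overlap of the dilated balls (property (ii)) keeps the sum comparable to the $JN_{p,0,s}(\Delta)$ norm.

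First I would fix $B \in \mathcal{F}$ and split, using the triangle inequality,
\[
\{x \in C_1 B : |f(x) - a| > \lambda\} \subset \{x \in C_1 B : |f(x) - m_f^s(C_1 B)| > \lambda/2\} \cup \{x \in C_1 B : |m_f^s(C_1 B) - a| > \lambda/2\}.
\]
For the first set, since $1 + \eta = C_2/C_1$ gives $\widehat{C_1 B} = (1+\eta)(C_1 B) = C_2 B$ and one checks $C_1 B$ plays the role of $B_0$ with $\widehat{B_0} = C_2 B \subset \Delta$, Theorem~\ref{thm1} yields
\[
\mu(\{x \in C_1 B : |f(x) - m_f^s(C_1 B)| > \lambda/2\}) \leq c\,\frac{\norm{f}_{JN_{p,0,s}(C_2 B)}^p}{(\lambda/2)^p},
\]
provided $s \le s_0$ and $s \le r = s$, which is our hypothesis. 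For the second set, the quantity $|m_f^s(C_1 B) - a|$ is a constant, so its superlevel set is either all of $C_1 B$ or empty, and in either case $\mu(\{x \in C_1 B : |m_f^s(C_1 B) - a| > \lambda/2\}) \le (2/\lambda)^p \norm{m_f^s(C_1 B) - m_f^s(C_1 B_\ast)}_{L^{p,\infty}(C_1 B)}^p$ by the definition of the weak $L^p$ quasinorm on $C_1 B$ applied to a constant function.

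Summing over $B \in \mathcal{F}$ and using subadditivity of $\mu$ together with property (i),
\[
\mu(\{x \in \Delta : |f(x) - a| > \lambda\}) \leq \sum_{B \in \mathcal{F}} \mu(\{x \in C_1 B : |f(x) - a| > \lambda\}),
\]
and the two pieces are bounded, respectively, by $\frac{2^p}{\lambda^p}\sum_{B} \norm{f - m_f^s(C_1 B)}_{L^{p,\infty}(C_1 B)}^p$ (after absorbing the Theorem~\ref{thm1} constant) and, via Lemma~\ref{bomanlemma}, by $\frac{2^p C_0}{\lambda^p}\sum_{V} \norm{f - m_f^s(C_1 V)}_{L^{p,\infty}(C_1 V)}^p$. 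The final step is to bound $\sum_{V \in \mathcal{F}} \norm{f - m_f^s(C_1 V)}_{L^{p,\infty}(C_1 V)}^p$ by the $JN_{p,0,s}(\Delta)$ norm: apply Theorem~\ref{thm1} once more on each $C_1 V$ to get $\norm{f - m_f^s(C_1 V)}_{L^{p,\infty}(C_1 V)}^p \le c\,\norm{f}_{JN_{p,0,s}(C_2 V)}^p$, and then the bounded overlap property (ii) of the balls $\{C_2 V\}$ ensures $\sum_V \norm{f}_{JN_{p,0,s}(C_2 V)}^p \le M \norm{f}_{JN_{p,0,s}(\Delta)}^p$, since a disjoint collection of balls inside $\bigcup_V C_2 V$ can be partitioned into at most $M$ subcollections, each contained in a single $C_2 V$ after refinement --- more carefully, one tests the $JN_{p,0,s}(\Delta)$ norm with collections distributed among the $C_2 V$ and uses that each point lies in at most $M$ of them. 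Collecting constants gives the asserted bound with $C$ depending on $p, c_\mu, C_1, C_2, C_3, \rho, M$.

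The main obstacle I anticipate is the very last bookkeeping step: making precise that $\sum_{V \in \mathcal{F}} \norm{f}_{JN_{p,0,s}(C_2 V)}^p \lesssim M\,\norm{f}_{JN_{p,0,s}(\Delta)}^p$. One cannot simply union the extremal disjoint collections for each $C_2 V$, because balls from different $C_2 V$'s need not be disjoint; the fix is to use the bounded overlap (ii) to color the index set of $\mathcal{F}$ with finitely many (at most $M$) colors so that same-colored dilated balls $C_2 V$ are pairwise disjoint, take a near-extremal disjoint collection inside each $C_2 V$, and observe that within a single color class the union of these collections is again a disjoint collection of balls inside $\Delta$, hence admissible for the $JN_{p,0,s}(\Delta)$ supremum; summing over the $\le M$ color classes gives the factor $M$. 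A secondary technical point, already handled implicitly above, is verifying that $C_1 B$ with the choice $1+\eta = C_2/C_1$ really satisfies the hypotheses of Theorem~\ref{thm1}, i.e.\ that $\widehat{C_1 B} = C_2 B$ and $C_2 B \subset \Delta$, the latter being exactly property (i).
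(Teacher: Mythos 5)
Your proposal is correct and follows essentially the same route as the paper: the same choice $a=m_f^{r}(C_1B_\ast)$, the same triangle-inequality split over the balls $C_1B$, Theorem~\ref{thm1} on each $C_1B$ with $\widehat{C_1B}=C_2B$, Lemma~\ref{bomanlemma} for the drift of medians, and the bounded-overlap coloring of $\{C_2B\}_{B\in\mathcal{F}}$ into at most $M$ pairwise disjoint subfamilies to conclude $\sum_B\norm{f}_{JN_{p,0,s}(C_2B)}^p\leq M\norm{f}_{JN_{p,0,s}(\Delta)}^p$. The final bookkeeping step you flag as the main obstacle is resolved exactly as you describe, which is precisely the paper's decomposition into the collections $\mathcal{D}_i$.
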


\begin{proof}
Let $s \leq r \leq \frac{1}{2}$, $\mathcal{F}$ be the collection of balls in the definition of the Boman set $\Delta$ and $B_\ast$ be the central ball. 
We have
\begin{align*}
\mu(\{ x \in \Delta: |f - m_f^{r}&(C_1 B_\ast)|  > \lambda \}) \leq \sum_{B \in \mathcal{F}} \mu(\{ x \in C_1 B: |f-m_f^{r}(C_1 B_\ast)| > \lambda \}) \\
&\leq \sum_{B \in \mathcal{F}} \mu(\{ x \in C_1 B: |f-m_f^{r}(C_1 B)| > \lambda/2 \}) \\
&\qquad + \sum_{B \in \mathcal{F}} \mu(\{ x \in C_1 B: |m_f^{r}(C_1 B) - m_f^{r}(C_1 B_\ast)| > \lambda/2 \}) \\
&= I_1 + I_2 .
\end{align*}
By applying Theorem \ref{thm1}, we get
\begin{align*}
I_1 \leq 2^p c \sum_{B \in \mathcal{F}} \frac{\norm{f}_{JN_{p,0,s}(C_2 B)}^p}{\lambda^p} .
\end{align*}
To estimate the second term, we use the definition of weak $L^p$ norm, Lemma~\ref{bomanlemma} and Theorem~\ref{thm1} to obtain
\begin{align*}
\lp \frac{\lambda}{2} \rp^p I_2 &\leq \sum_{B \in \mathcal{F}} \norm{ m_f^{r}(C_1 B) - m_f^{r}(C_1 B_\ast) }_{L^{p,\infty}(C_1 B)}^p \\
&\leq C_0 \sum_{B \in \mathcal{F}} \norm{ f - m_f^{r}(C_1 B) }_{L^{p,\infty}(C_1 B)}^p \\
& = C_0 \sum_{B \in \mathcal{F}} \sup_{\gamma >0} \gamma^p \mu(\{ x \in C_1 B: |f-m_f^{r}(C_1 B)| > \gamma \}) \\
&\leq C_0 \sum_{B \in \mathcal{F}} c \norm{f}_{JN_{p,0,s}(C_2 B)}^p .
\end{align*}
Therefore, it holds that
\[
I_1 + I_2 \leq \frac{\widetilde{C}}{\lambda^p} \sum_{B \in \mathcal{F}} \norm{f}_{JN_{p,0,s}(C_2 B)}^p ,
\]
where $\widetilde{C} = 2^p c (C_0 + 1)$.
By property (ii) of Lemma~\ref{boman_def}, the collection $\{C_2 B\}_{B \in \mathcal{F}}$ consists of balls that intersect at most $M$ balls of the same collection.
Thus, it can be decomposed into at most $M$ collections of pairwise disjoint balls $\mathcal{D}_i, i=1,\dots, M$ such that $\{C_2 B\}_{B \in \mathcal{F}} = \bigcup_{i=1}^M \mathcal{D}_i$.
This implies that
\begin{align*}
\mu(\{ x \in \Delta: |f - m_f^{r}(C_1 B_\ast)|  > \lambda \}) &\leq \frac{\widetilde{C}}{\lambda^p} \sum_{B \in \mathcal{F}} \norm{f}_{JN_{p,0,s}(C_2 B)}^p \\
&= \frac{\widetilde{C}}{\lambda^p} \sum_{i=1}^M \sum_{B \in \mathcal{D}_i} \norm{f}_{JN_{p,0,s}(C_2 B)}^p \\
&\leq \frac{C}{\lambda^p} \norm{f}_{JN_{p,0,s}(\Delta)}^p ,
\end{align*}
where $C = \widetilde{C} M$.
This concludes the proof.

\end{proof}

If all balls are Boman sets with uniform parameters, then the median-type John--Nirenberg space coincides with the integral-type John--Nirenberg space in every open set.
For example, geodesic spaces satisfy the uniform Boman condition on balls~\cite{koskela}.

\begin{corollary}
\label{equivalence_Boman}
Let $1<p<\infty$, $0 < q < p$ and $0 < s \leq s_0$, where $s_0$ is given in Theorem~\ref{thm1}.
Assume that all balls in $X$ are Boman sets with uniform parameters $C_1,C_2,C_3,\rho$ and $M$.
Then for every open set $\Omega \subset X$ it holds that
\[
s^\frac{1}{q} \norm{f}_{JN_{p,0,s}(\Omega)} \leq \norm{f}_{JN_{p,q}(\Omega)} \leq \lp \frac{Cp}{p-q} \rp^\frac{1}{q} \norm{f}_{JN_{p,0,s}(\Omega)},
\]
where $C$ is the constant from Theorem~\ref{JN-lemma_Boman}.

\end{corollary}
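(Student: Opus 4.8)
The plan is as follows. The left-hand inequality $s^{1/q}\norm{f}_{JN_{p,0,s}(\Omega)}\le\norm{f}_{JN_{p,q}(\Omega)}$ is already recorded in Proposition~\ref{LpinJNp}, so all the content is in the right-hand inequality, which I would deduce from the global John--Nirenberg inequality (Theorem~\ref{JN-lemma_Boman}) applied ball by ball. We may assume $\norm{f}_{JN_{p,0,s}(\Omega)}<\infty$, since otherwise there is nothing to prove.

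First I would fix an arbitrary countable family of pairwise disjoint balls $\{B_i\}_i\subset\Omega$. By hypothesis each $B_i$ is a Boman set with the \emph{same} parameters $C_1,C_2,C_3,\rho,M$, and $f\in JN_{p,0,s}(B_i)$ because $B_i\subset\Omega$, so Theorem~\ref{JN-lemma_Boman} applied with $\Delta=B_i$ (and the \emph{uniform} constant $C$) provides $a_i\in\mathbb{R}$ with
\[
\mu(\{x\in B_i:\ |f(x)-a_i|>\lambda\})\le C\,\frac{\norm{f}_{JN_{p,0,s}(B_i)}^p}{\lambda^p}\qquad\text{for all }\lambda>0 .
\]
Next I would upgrade this weak-type bound to an $L^q$ bound on $B_i$ by the layer-cake formula: write $\int_{B_i}|f-a_i|^q\dmu=\int_0^\infty q\lambda^{q-1}\mu(\{x\in B_i:|f-a_i|>\lambda\})\dla$, split the integral at a level $\lambda_0>0$, estimate the part over $(0,\lambda_0)$ trivially by $\mu(B_i)\lambda_0^q$ and the part over $(\lambda_0,\infty)$ by the weak-type estimate (the exponent $q-1-p$ is $<-1$, so the tail integral converges and equals $\tfrac{q}{p-q}C\norm{f}_{JN_{p,0,s}(B_i)}^p\lambda_0^{q-p}$), and finally choose $\lambda_0^p=\norm{f}_{JN_{p,0,s}(B_i)}^p/\mu(B_i)$. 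Since $p-q+Cq\le Cp$ (as $C\ge1$ and $q<p$), this yields
\[
\dashint_{B_i}|f-a_i|^q\dmu\le\frac{Cp}{p-q}\lp\frac{\norm{f}_{JN_{p,0,s}(B_i)}^p}{\mu(B_i)}\rp^{q/p},
\]
and hence, using $\inf_{c_i}\dashint_{B_i}|f-c_i|^q\dmu\le\dashint_{B_i}|f-a_i|^q\dmu$ and raising to the power $p/q$,
\[
\mu(B_i)\lp\inf_{c_i}\dashint_{B_i}|f-c_i|^q\dmu\rp^{p/q}\le\lp\frac{Cp}{p-q}\rp^{p/q}\norm{f}_{JN_{p,0,s}(B_i)}^p .
\]
(In particular the weak-type bounds show $f\in L^q_{\text{loc}}(\Omega)$, so $\norm{f}_{JN_{p,q}(\Omega)}$ is meaningful.)

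Finally I would sum over $i$ and invoke the superadditivity of the $JN_{p,0,s}$ functional over disjoint sets: since the $B_i$ are pairwise disjoint, concatenating (nearly) optimal admissible families of balls for the individual norms $\norm{f}_{JN_{p,0,s}(B_i)}$ produces an admissible family for $\Omega$, so $\sum_i\norm{f}_{JN_{p,0,s}(B_i)}^p\le\norm{f}_{JN_{p,0,s}(\Omega)}^p$. Combining the last display with this, taking the supremum over all pairwise disjoint families $\{B_i\}\subset\Omega$, and then extracting $p$-th roots gives $\norm{f}_{JN_{p,q}(\Omega)}\le(Cp/(p-q))^{1/q}\norm{f}_{JN_{p,0,s}(\Omega)}$, as claimed.

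None of the steps is genuinely hard; the two points requiring care are the reduction to balls — where one must use the \emph{uniformity} of the Boman parameters so that a single constant $C$ from Theorem~\ref{JN-lemma_Boman} works simultaneously for every $B_i$ — and the bookkeeping in the superadditivity step, where, if some $\norm{f}_{JN_{p,0,s}(B_i)}^p$ is large or the sum is infinite, one first restricts to finitely many indices and lets the truncation exhaust $\mathbb{N}$.
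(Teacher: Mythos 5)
Your proposal is correct and follows essentially the same route as the paper: apply Theorem~\ref{JN-lemma_Boman} to each ball $B_i$ (using the uniform Boman parameters), upgrade the weak-type bound to an $L^q$ oscillation bound via Cavalieri's principle with the same choice of splitting level $\mu(B_i)^{-1/p}\norm{f}_{JN_{p,0,s}(B_i)}$, and then sum over the disjoint balls using $\sum_i \norm{f}_{JN_{p,0,s}(B_i)}^p \le \norm{f}_{JN_{p,0,s}(\Omega)}^p$. The only difference is that you spell out the superadditivity step (concatenating admissible families) which the paper leaves implicit; this is a harmless and welcome clarification.
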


\begin{proof}

Let $\{B_i\}_i$ be a countable collection of pairwise disjoint balls contained in $\Omega$.
The first inequality is stated and proven in Proposition~\ref{LpinJNp}.
For the second inequality, since by the assumption the balls $B_i$ are Boman sets, we may apply Theorem~\ref{JN-lemma_Boman} on $B_i$. This together with Cavalieri's principle implies
\begin{align*}
\int_{B_i} |f-a|^q \dmu &= q \int_0^\infty \lambda^{q-1} \mu(\{ x \in B_i: |f-a| > \lambda \}) \dla \\
&\leq q \int_{\mu(B_i)^{-\frac{1}{p}} \norm{f}_{JN_{p,0,s}(B_i)}}^\infty C \lambda^{q-p-1} \norm{f}_{JN_{p,0,s}(B_i)}^p \dla \\ &\qquad + q \int_0^{\mu(B_i)^{-\frac{1}{p}} \norm{f}_{JN_{p,0,s}(B_i)}} \lambda^{q-1} \mu(B_i) \dla \\
&= \frac{Cq}{p-q} \mu(B_i)^{1-\frac{q}{p}} \norm{f}_{JN_{p,0,s}(B_i)}^q + \mu(B_i)^{1-\frac{q}{p}} \norm{f}_{JN_{p,0,s}(B_i)}^q \\
&\leq \frac{Cp}{p-q} \mu(B_i)^{1-\frac{q}{p}} \norm{f}_{JN_{p,0,s}(B_i)}^q ,
\end{align*}
where $C$ is the constant from Theorem~\ref{JN-lemma_Boman}.
We then estimate
\begin{align*}
\sum_{i=1}^\infty \mu(B_i) \lp \inf_{c_i} \dashint_{B_i} |f-c_i|^q \dmu \rp^\frac{p}{q} &\leq \sum_{i=1}^\infty \mu(B_i) \lp \dashint_{B_i} |f-a|^q \dmu \rp^\frac{p}{q} \\
&\leq \lp \frac{Cp}{p-q} \rp^\frac{p}{q} \sum_{i=1}^\infty \norm{f}_{JN_{p,0,s}(B_i)}^p \\
&\leq \lp \frac{Cp}{p-q} \rp^\frac{p}{q} \norm{f}_{JN_{p,0,s}(\Omega)}^p .
\end{align*}
Thus, we conclude that
\[
\norm{f}_{JN_{p,q}(\Omega)} \leq \lp \frac{Cp}{p-q} \rp^\frac{1}{q} \norm{f}_{JN_{p,0,s}(\Omega)}.
\]

\end{proof}


\begin{thebibliography}{10}

\bibitem{aalto}
Daniel Aalto, Lauri Berkovits, Outi~Elina Kansanen, and Hong Yue,
  \emph{John-{N}irenberg lemmas for a doubling measure}, Studia Math.
  \textbf{204} (2011), no.~1, 21--37.

\bibitem{kinnunen}
Lauri Berkovits, Juha Kinnunen, and Jos\'{e}~Mar\'{\i}a Martell,
  \emph{Oscillation estimates, self-improving results and good-{$\lambda$}
  inequalities}, J. Funct. Anal. \textbf{270} (2016), no.~9, 3559--3590.

\bibitem{bjorn}
Anders Bj\"{o}rn and Jana Bj\"{o}rn, \emph{Nonlinear potential theory on metric
  spaces}, EMS Tracts in Mathematics, vol.~17, European Mathematical Society
  (EMS), Z\"{u}rich, 2011.

\bibitem{bomanlp}
Jan Boman, \emph{L$_p$-estimates for very strongly elliptic systems}, Report
  no. 29, Department of Mathematics, University of Stockholm, Sweden (1982).

\bibitem{boman_eq_john}
Stephen Buckley, Pekka Koskela, and Guozhen Lu, \emph{Boman equals {J}ohn},
  X{VI}th {R}olf {N}evanlinna {C}olloquium ({J}oensuu, 1995), de Gruyter,
  Berlin, 1996, pp.~91--99.

\bibitem{campanato1966}
Sergio Campanato, \emph{Su un teorema di interpolazione di {G}. {S}tampacchia},
  Ann. Scuola Norm. Sup. Pisa Cl. Sci. (3) \textbf{20} (1966), 649--652.

\bibitem{chua}
Seng-Kee Chua, \emph{Weighted {S}obolev inequalities on domains satisfying the
  chain condition}, Proc. Amer. Math. Soc. \textbf{117} (1993), no.~2,
  449--457.

\bibitem{korte}
Galia Dafni, Tuomas Hyt\"onen, Riikka Korte, and Hong Yue, \emph{The space
  {$JN_p$}: {N}ontriviality and duality}, J. Funct. Anal. \textbf{275} (2018),
  no.~3, 577--603.

\bibitem{federer_herbert_william}
Herbert Federer and William~P. Ziemer, \emph{The {L}ebesgue set of a function
  whose distribution derivatives are {$p$}-th power summable}, Indiana Univ.
  Math. J. \textbf{22} (1972/73), 139--158.

\bibitem{franchi_perez_wheeden}
Bruno Franchi, Carlos P\'{e}rez, and Richard~L. Wheeden, \emph{Self-improving
  properties of {J}ohn-{N}irenberg and {P}oincar\'{e} inequalities on spaces of
  homogeneous type}, J. Funct. Anal. \textbf{153} (1998), no.~1, 108--146.

\bibitem{fujii}
Nobuhiko Fujii, \emph{A condition for a two-weight norm inequality for singular
  integral operators}, Studia Math. \textbf{98} (1991), no.~3, 175--190.

\bibitem{giaquinta}
Mariano Giaquinta, \emph{Introduction to regularity theory for nonlinear
  elliptic systems}, Lectures in Mathematics ETH Z\"{u}rich, Birkh\"{a}user
  Verlag, Basel, 1993.

\bibitem{giaquinta_martinazzi}
Mariano Giaquinta and Luca Martinazzi, \emph{An introduction to the regularity
  theory for elliptic systems, harmonic maps and minimal graphs}, second ed.,
  Appunti. Scuola Normale Superiore di Pisa (Nuova Serie) [Lecture Notes.
  Scuola Normale Superiore di Pisa (New Series)], vol.~11, Edizioni della
  Normale, Pisa, 2012.

\bibitem{giusti}
Enrico Giusti, \emph{Direct methods in the calculus of variations}, World
  Scientific Publishing Co., Inc., River Edge, NJ, 2003.

\bibitem{gogatishvili}
Amiran Gogatishvili, Pekka Koskela, and Yuan Zhou, \emph{Characterizations of
  {B}esov and {T}riebel-{L}izorkin spaces on metric measure spaces}, Forum
  Math. \textbf{25} (2013), no.~4, 787--819.

\bibitem{koskela}
Piotr Haj{\l}asz and Pekka Koskela, \emph{Sobolev met {P}oincar\'{e}}, Mem.
  Amer. Math. Soc. \textbf{145} (2000), no.~688, x+101.

\bibitem{heikkinen}
Toni Heikkinen, \emph{Generalized {L}ebesgue points for {H}aj\l asz functions},
  J. Funct. Spaces (2018), Art. ID 5637042, 12 pp.

\bibitem{heikkinen_measuredensity}
Toni Heikkinen, Lizaveta Ihnatsyeva, and Heli Tuominen, \emph{Measure density
  and extension of {B}esov and {T}riebel-{L}izorkin functions}, J. Fourier
  Anal. Appl. \textbf{22} (2016), no.~2, 334--382.

\bibitem{heikkinen_kinnunen}
Toni Heikkinen and Juha Kinnunen, \emph{A median approach to differentiation
  bases}, Atti Accad. Naz. Lincei Rend. Lincei Mat. Appl. \textbf{30} (2019),
  no.~1, 41--66.

\bibitem{approxquasi}
Toni Heikkinen, Pekka Koskela, and Heli Tuominen, \emph{Approximation and
  quasicontinuity of {B}esov and {T}riebel-{L}izorkin functions}, Trans. Amer.
  Math. Soc. \textbf{369} (2017), no.~5, 3547--3573.

\bibitem{approxholder}
Toni Heikkinen and Heli Tuominen, \emph{Approximation by {H}\"older functions
  in {B}esov and {T}riebel-{L}izorkin spaces}, Constr. Approx. \textbf{44}
  (2016), no.~3, 455--482.

\bibitem{heinonen}
Juha Heinonen, \emph{Lectures on analysis on metric spaces}, Universitext,
  Springer-Verlag, New York, 2001.

\bibitem{johndomain}
Ritva Hurri-Syrj\"{a}nen, Niko Marola, and Antti~V. V\"{a}h\"{a}kangas,
  \emph{Aspects of local-to-global results}, Bull. Lond. Math. Soc. \textbf{46}
  (2014), no.~5, 1032--1042.

\bibitem{jawerth_perez_welland}
Bj\"{o}rn Jawerth, Carlos P\'{e}rez, and Grant Welland, \emph{The positive cone
  in {T}riebel-{L}izorkin spaces and the relation among potential and maximal
  operators}, Harmonic analysis and partial differential equations ({B}oca
  {R}aton, {FL}, 1988), (eds M. Milman and T. Schonbek) Contemp. Math., vol. 107, Amer. Math. Soc., Providence,
  RI, 1990, pp.~71--91.

\bibitem{jawerth_torchinsky}
Bj\"{o}rn Jawerth and Alberto Torchinsky, \emph{Local sharp maximal functions},
  J. Approx. Theory \textbf{43} (1985), no.~3, 231--270.

\bibitem{johnmedian}
Fritz John, \emph{Quasi-isometric mappings}, Seminari 1962/63 {A}nal. {A}lg.
  {G}eom. e {T}opol., vol. 2, {I}st. {N}az. {A}lta {M}at, Ediz. Cremonese,
  Rome, 1965, pp.~462--473.

\bibitem{john_original}
Fritz John and Louis Nirenberg, \emph{On functions of bounded mean
  oscillation}, Comm. Pure Appl. Math. \textbf{14} (1961), 415--426.

\bibitem{karak}
Nijjwal Karak, \emph{Triebel-{L}izorkin capacity and {H}ausdorff measure in
  metric spaces}, Math. Slovaca \textbf{70} (2020), no.~3, 617--624.

\bibitem{lerner}
Andrei~K. Lerner, \emph{A pointwise estimate for the local sharp maximal
  function with applications to singular integrals}, Bull. Lond. Math. Soc.
  \textbf{42} (2010), no.~5, 843--856.

\bibitem{lerner_perez}
Andrei~K. Lerner and Carlos P\'{e}rez, \emph{Self-improving properties of
  generalized {P}oincar\'{e} type inequalities through rearrangements}, Math.
  Scand. \textbf{97} (2005), no.~2, 217--234.

\bibitem{macmanus_perez}
Paul MacManus and Carlos P\'{e}rez, \emph{Generalized {P}oincar\'{e}
  inequalities: sharp self-improving properties}, Internat. Math. Res. Notices
  (1998), no.~2, 101--116.

\bibitem{marolasaari}
Niko Marola and Olli Saari, \emph{Local to global results for spaces of {BMO}
  type}, Math. Z. \textbf{282} (2016), no.~1-2, 473--484.

\bibitem{medians}
Jonathan Poelhuis and Alberto Torchinsky, \emph{Medians, continuity, and
  vanishing oscillation}, Studia Math. \textbf{213} (2012), no.~3, 227--242.

\bibitem{stampacchia1965}
Guido Stampacchia, \emph{The spaces {${\mathcal L}^{(p,\lambda )}$},
  {$N^{(p,\lambda )}$} and interpolation}, Ann. Scuola Norm. Sup. Pisa Cl. Sci.
  (3) \textbf{19} (1965), 443--462.

\bibitem{stromberg}
Jan-Olov Str\"{o}mberg, \emph{Bounded mean oscillation with {O}rlicz norms and
  duality of {H}ardy spaces}, Indiana Univ. Math. J. \textbf{28} (1979), no.~3,
  511--544.

\bibitem{weightedhardy}
Jan-Olov Str\"omberg and Alberto Torchinsky, \emph{Weighted {H}ardy spaces},
  Lecture Notes in Mathematics, vol. 1381, Springer-Verlag, Berlin, 1989.

\bibitem{zhou}
Yuan Zhou, \emph{Fractional {S}obolev extension and imbedding}, Trans. Amer.
  Math. Soc. \textbf{367} (2015), no.~2, 959--979.

\end{thebibliography}
\end{document}